\documentclass{article}

\def\TimesFont{} 
\def\ParSkip{} 
\usepackage{amssymb,amsmath,amsthm,bbm}
\usepackage{verbatim,float,url,dsfont}
\usepackage{graphicx,subcaption,psfrag}
\usepackage{algorithm,algorithmic}
\usepackage{mathtools,enumitem}
\usepackage{multirow}
\usepackage{ragged2e}
\usepackage{xr-hyper}
\usepackage{array}

\usepackage[utf8]{inputenc} 
\usepackage[T1]{fontenc}    
\usepackage{booktabs}       
\usepackage{nicefrac}         
\usepackage{microtype}      

\usepackage[margin=1in]{geometry}
\usepackage[round]{natbib}
\usepackage[colorlinks=true,citecolor=blue,urlcolor=blue,linkcolor=blue]{hyperref}

\ifdefined\TimesFont
\usepackage{times} 
\fi

\ifdefined\ParSkip
\usepackage{parskip} 
\fi

\newtheorem{theorem}{Theorem}
\newtheorem{lemma}{Lemma}

\newtheorem{proposition}{Proposition}
\theoremstyle{definition}
\newtheorem{remark}{Remark}

\newtheorem{example}{Example}
\newtheorem*{remark*}{Remark}  

\newtheorem*{assumption*}{\assumptionnumber}
\providecommand{\assumptionnumber}{}
\makeatletter

\makeatother

\makeatletter
\newcommand*\rel@kern[1]{\kern#1\dimexpr\macc@kerna}
\newcommand*\widebar[1]{%
  \begingroup
  \def\mathaccent##1##2{%
    \rel@kern{0.8}%
    \overline{\rel@kern{-0.8}\macc@nucleus\rel@kern{0.2}}%
    \rel@kern{-0.2}%
  }%
  \macc@depth\@ne
  \let\math@bgroup\@empty \let\math@egroup\macc@set@skewchar
  \mathsurround\z@ \frozen@everymath{\mathgroup\macc@group\relax}%
  \macc@set@skewchar\relax
  \let\mathaccentV\macc@nested@a
  \macc@nested@a\relax111{#1}%
  \endgroup
}
\makeatother


\DeclareMathOperator{\Var}{Var}

\DeclareMathOperator{\sign}{sign}

\def\E{\mathbb{E}}

\def\R{\mathbb{R}}

\def\cF{\mathcal{F}}

\def\cY{\mathcal{Y}}

\DeclareMathOperator{\KL}{KL}

\usepackage{tikz}
\newcommand{\tikzmark}[1]{\tikz[overlay,remember picture,baseline]
  \node[anchor=base, inner sep=0pt] (#1) {\strut};}
\usepackage{pgfplots}
  \pgfplotsset{compat=1.17}
\usepackage{listofitems}
\usepackage{xintexpr}
\usepackage{readarray}

\title{Asymmetric Penalties Underlie Proper Loss Functions in Probabilistic
  Forecasting}
\author{Erez Buchweitz \and João Vitor Romano \and Ryan J. Tibshirani}
\date{}

\begin{document}
\maketitle

\begin{abstract}
Accurately forecasting the probability distribution of phenomena of interest is
a classic and ever more widespread goal in statistics and decision theory.  In
comparison to point forecasts, probabilistic forecasts aim to provide a more
complete and informative characterization of the target variable. This endeavor
is only fruitful, however, if a forecast is ``close'' to the distribution it
attempts to predict. The role of a loss function---also known as a scoring
rule---is to make this precise by providing a quantitative
measure of proximity between a forecast distribution and target random
variable. Numerous loss functions have been proposed in the literature, with a
strong focus on proper losses, that is, losses whose expectations are minimized
when the forecast distribution is the same as the target. In this paper, we show
that a broad class of proper loss functions penalize \emph{asymmetrically}, in
the sense that underestimating a given parameter of the target distribution can
incur larger loss than overestimating it, or vice versa. Our theory covers many
popular losses, such as the logarithmic, continuous ranked probability,
quadratic, and spherical losses, as well as the energy and threshold-weighted
generalizations of continuous ranked probability loss. To complement our theory,
we present experiments with real epidemiological, meteorological, and retail
forecast data sets. Further, as an implication of the loss asymmetries revealed
by our work, we show that hedging is possible under a setting of distribution
shift.
\end{abstract}

\section{Introduction}
\label{sec:introduction}

In probabilistic forecasting, the goal is to predict the distribution of a
target variable, rather than a particular parameter of that distribution, such
as its mean or a single quantile, which is termed point forecasting. The
pursuit of probabilistic forecasts has been promoted based on philosophical
grounds, as an adequate form of expressing the inherent uncertainty in
predicting the target variable \citep{dawid1984present,
gneiting2007strictly}. From a practical viewpoint, a probabilistic forecast
provides more detailed information on the possible realizations of the target,
which is generally considered useful for decision making
\citep{jordan2011operational, jolliffe2012forecast,
  cramer2022eval}. Accordingly, in various scientific disciplines, probabilistic
forecasts are widespread and growing in use \citep{gneiting2014probabilistic},
with some examples being the prediction of
floods \citep{pappenberger2011impact},
 earthquakes \citep{schorlemmer2018collaboratory},
epidemics \citep{cramer2022eval},
energy usage \citep{hong2016probabilistic},
population growth \citep{raftery2023probabilistic},
and inflation \citep{galbraith2012assessing}.

Loss functions measure the discrepancy between a prediction and an observed
target. In probabilistic forecasting, each prediction is an entire probability
distribution, whereas the target is a random variable. Hence, a loss function
assigns a numeric value to the discrepancy between a real value (in case the
target variable is real) and a probability distribution from which it has
purportedly originated. The study of loss functions---commonly called scoring
rules in probabilistic forecasting---originated in weather
forecasting \citep{brier1950verification, winkler1968good,
  matheson1976scoring}, intertwined with the development of subjective
probability \citep{good1952rational, definetti1970theory,
  savage1971elicitation}, and has largely revolved around propriety. A loss
function is deemed proper if the least loss, in expectation over all possible
realizations of the target, is incurred when the forecast equals the
distribution of the target \citep{gneiting2007strictly}.

It is commonly held that proper losses encourage ``honest forecasting''
\citep{gneiting2007strictly, parry2012proper}; formally, this is guaranteed only
in the special (rare) case that the forecaster knows \emph{with certainty} what
the distribution of the target is, as then they can do no better than to
forecast it. In any case, this forms the basis of the wide appeal and
subsequently the wide prevalence of proper losses, in both theory and practice.

A practitioner looking to fit or evaluate probabilistic forecasts must choose
which loss function to optimize. For instance, it is common to fit a forecasting
model by minimizing the average loss over a training set \citep{rasp2018neural},
or to choose from among a collection of forecasts by minimizing the average loss
on a test set \citep{cramer2022eval}. \citet{dawid2007geometry} lays out a
method for constructing proper losses in which propriety is tied to Bayes
(optimal) actions in a given decision-making task. However, it remains that
practitioners largely prefer to choose one out of a plethora of losses that have
been proposed in the literature for general purpose, or a combination thereof
(as exhibited in the domain-focused references on forecasting given
previously). In the absence of a definitive theory as to which loss is
preferable in a given situation, some authors have recommended choosing one
whose well-known properties appear to be aligned with the use case
\citep{winkler1996scoring, gneiting2007strictly}. The study of the properties of
each particular loss function has therefore become a major theme in
probabilistic forecasting, and we contribute to this literature by
characterizing \emph{asymmetries}: which forecasting errors are awarded lesser
or greater loss, by particular loss functions.

By definition, any proper loss function will favor the true distribution of the
target by awarding it the minimal loss, on average over all possible
realizations of the target. But in practice, forecasts are rarely equal to the
true distribution of the target due to model misspecification, distribution
shift and for other reasons. Which forecast will then be awarded the least
expected loss depends on how a particular loss function penalizes different
kinds of errors. Figure \ref{fig:heatmaps-normal} displays the expected loss
when the forecast and target distribution are both normal, for two of the most
commonly used loss functions in practice. Given a choice between two
forecasts where the forecast variance is either double or half the target
variance, \emph{logarithmic loss} will favor doubling the variance, choosing a
flat, less informative forecast, while \emph{continuous ranked probability (CRP)
  loss} will favor halving the variance, providing for a sharp overconfident
forecast. (In both cases the mean is correctly specified. For definitions of
those and other loss functions, see Section \ref{sec:preliminaries}.)

The same findings can be recreated on real data from the Covid-19 Forecast Hub
\citep{cramer2022us}, as shown in Figure \ref{fig:heatmaps-covid}. (For
details on the experimental setup, see Section \ref{sec:empirical-results}.) We
highlight another effect appearing in the left-most plot in Figure
\ref{fig:heatmaps-covid}: given a choice between two forecasts, either shifted
upward by one unit, or shifted downward by the same amount, logarithmic loss
will favor the downshifted forecast. We attribute this asymmetry effect
particularly to right-skewed forecasts, which Covid-19 mortality distributions
typically are. Finally, notice in Figure \ref{fig:heatmaps-covid} that the
target mean and variance do not minimize logarithmic loss. This is because the
forecast location-scale family is misspecified, meaning that it does not equal
the target location-scale family. A setting of misspecification in which
minimizers of loss deviate from the target is studied analytically in Section
\ref{sec:hedging}, and further research is warranted.

\begin{figure}
\vspace{-46pt}
\centering
\begin{tabular}{cc}
Logarithmic loss & CRP loss \\
\includegraphics[width=0.28\textwidth]{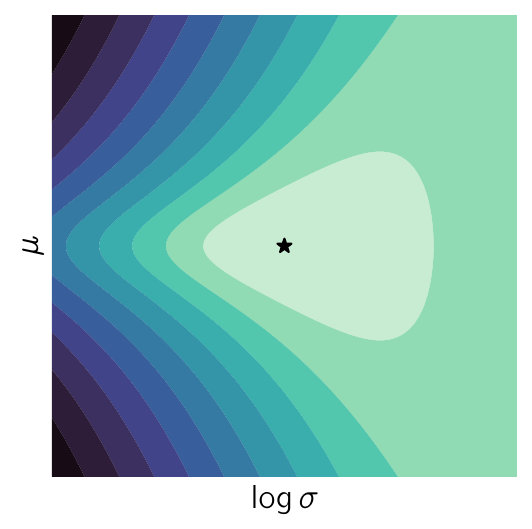} &
\includegraphics[width=0.28\textwidth]{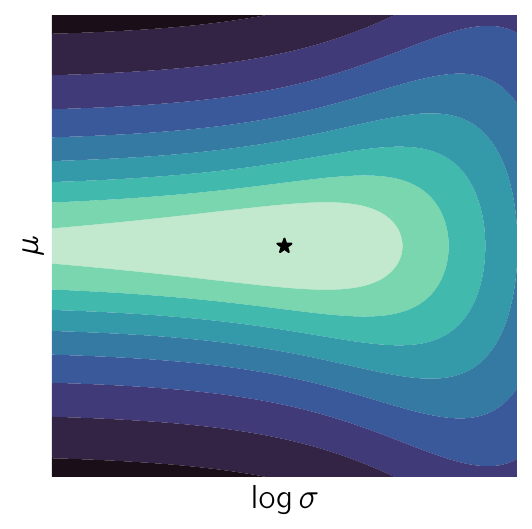}
\end{tabular}
\vspace{-15pt}
\caption{\small Expected logarithmic and CRP losses for a fixed standard normal
  target and normal forecasts with varying location $\mu$ and scale
  $\sigma$. A lighter color represents a lower loss, with minimum achieved at
  the star, where the forecast distribution is also a standard normal. When the
  location is correctly specified, logarithmic loss penalizes underestimating
  the scale more than overestimating it, whereas the opposite is true for
  CRP. When the scale is correctly specified, both losses penalize symmetrically
  on the location.}
\label{fig:heatmaps-normal}

\medskip
\begin{tabular}{cc}
Logarithmic loss & CRP loss \\
\includegraphics[width=0.28\textwidth]{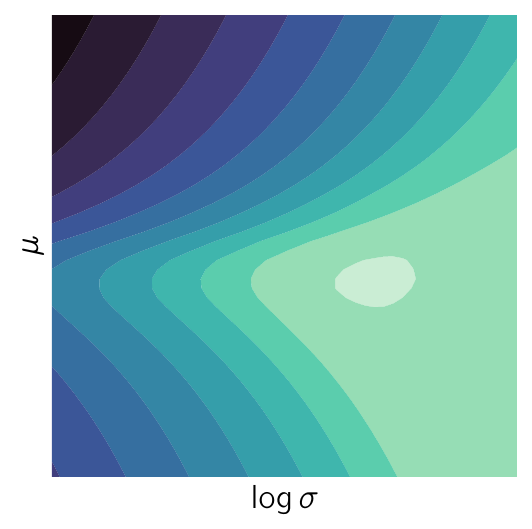} &
\includegraphics[width=0.28\textwidth]{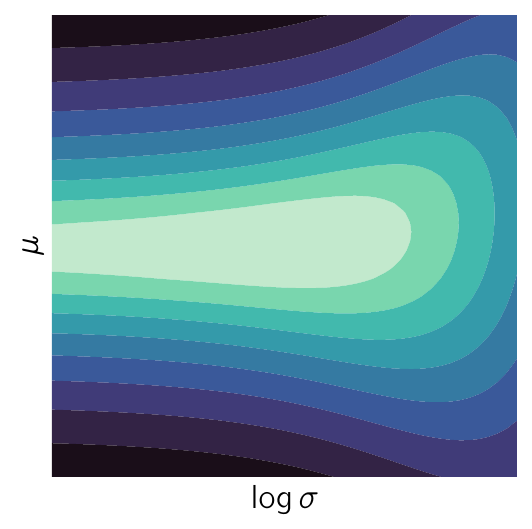}
\end{tabular}
\vspace{-15pt}
\caption{\small Average logarithmic and CRP losses over targets and forecasts
  from the Covid-19 Forecast Hub (for details see Section
  \ref{sec:empirical-results}). The qualitative assessment for CRP loss is
  exactly the same as in the normal case; for logarithmic loss there is an
  additional location asymmetry that is due to the forecasts being
  right-skewed.}
\label{fig:heatmaps-covid}

\bigskip
{\scriptsize
\begin{tabular}{l@{\hspace{12em}}l}
  \normalsize{Logarithmic loss}        & \normalsize{CRP loss} \\
  \midrule
  1. COVIDhub-ensemble                \tikzmark{a1} & \tikzmark{a2} 1. COVIDhub-ensemble                \tikzmark{a3} \\
  2. UMass-MechBayes                  \tikzmark{d1} & \tikzmark{b2} 6. SteveMcConnell-CovidComplete     \tikzmark{b3} \\
  3. Karlen-pypm                      \tikzmark{c1} & \tikzmark{c2} 3. Karlen-pypm                      \tikzmark{c3} \\
  4. OliverWyman-Navigator            \tikzmark{h1} & \tikzmark{d2} 2. UMass-MechBayes                  \tikzmark{d3} \\
  5. epiforecasts-ensemble1           \tikzmark{g1} & \tikzmark{e2} \textcolor{gray}{8. GT-DeepCOVID}   \tikzmark{e3} \\
  6. SteveMcConnell-CovidComplete     \tikzmark{b1} & \tikzmark{f2} 9. MOBS-GLEAM\_COVID                \tikzmark{f3} \\
  \textcolor{gray}{7. CMU-TimeSeries} \tikzmark{i1} & \tikzmark{g2} 5. epiforecasts-ensemble1           \tikzmark{g3} \\
  \textcolor{gray}{8. GT-DeepCOVID}   \tikzmark{e1} & \tikzmark{h2} 4. OliverWyman-Navigator            \tikzmark{h3} \\
  9. MOBS-GLEAM\_COVID                \tikzmark{f1} & \tikzmark{i2} \textcolor{gray}{7. CMU-TimeSeries} \tikzmark{i3} \\
  10. USC-SI\_kJalpha                  \tikzmark{j1} & \tikzmark{j2} 10. USC-SI\_kJalpha                  \tikzmark{j3} \\
\end{tabular}
\begin{tikzpicture}[overlay, remember picture]
  \draw[-,thick,orange] (a1) to [out=0,in=180] (a2);
  \draw[-,thick,orange] (b1) to [out=0,in=180] (b2);
  \draw[-,thick,orange] (c1) to [out=0,in=180] (c2);
  \draw[-,thick,orange] (f1) to [out=0,in=180] (f2);
  \draw[-,thick,cyan]   (d1) to [out=0,in=180] (d2);
  \draw[-,thick,cyan]   (g1) to [out=0,in=180] (g2);
  \draw[-,thick,cyan]   (h1) to [out=0,in=180] (h2);
  \draw[-,thick,cyan]   (j1) to [out=0,in=180] (j2);
\end{tikzpicture}}
\vspace{-5pt}
\caption{\small Ranking of top forecasters in the Covid-19 Forecast Hub via
  logarithmic and CRP losses. Each line connects the same forecaster over the
  two ranked lists, and orange (cyan) lines identify the forecasters with lowest
  (highest) standardized variance.}
\label{tab:covid-ranking}

\bigskip\bigskip
\begin{tikzpicture}[yscale=-1]
  \def\height{4.9}
  \def\margin{0.2*\textwidth}
  \useasboundingbox (0,0) rectangle (\textwidth,\height);
  \readdef{res/logranklowvar.txt}{\logranklowvar}
  \readdef{res/crpranklowvar.txt}{\crpranklowvar}
  \readdef{res/logrankhighvar.txt}{\logrankhighvar}
  \readdef{res/crprankhighvar.txt}{\crprankhighvar}
  \readdef{res/logforecasters.txt}{\logforecasters}
  \readdef{res/crpforecasters.txt}{\crpforecasters}
  \setsepchar{,}
  \readlist\logforecasters{\logforecasters}
  \readlist\crpforecasters{\crpforecasters}
  \pgfmathsetmacro{\collog}{\margin}
  \pgfmathsetmacro{\colcrp}{\textwidth - \margin}
  \draw (\collog pt,0) -- (\collog pt,\height) node[pos=-0.01, above] {Logarithmic loss};
  \draw (\colcrp pt,0) -- (\colcrp pt,\height) node[pos=-0.01, above] {CRP loss};
  \foreach \n in {0,...,14} {
    \pgfmathsetmacro{\ilow}{{\logranklowvar}[\n]}
    \pgfmathsetmacro{\jlow}{{\crpranklowvar}[\n]}
    \pgfmathsetmacro{\ihigh}{{\logrankhighvar}[\n]}
    \pgfmathsetmacro{\jhigh}{{\crprankhighvar}[\n]}
    \draw[-,thick,orange] (\collog pt,{\ilow/10}) to [out=0,in=180] (\colcrp pt,{\jlow/10});
    \draw[-,thick,cyan] (\collog pt,{\ihigh/10}) to [out=0,in=180] (\colcrp pt,{\jhigh/10});
    \node[anchor=west,align=left] at (1.4,{\ilow/10}) {\fontsize{3.3}{4}\selectfont \logforecasters[\ilow+1]};
    \node[anchor=west,align=left] at (1.4,{\ihigh/10}) {\fontsize{3.3}{4}\selectfont \logforecasters[\ihigh+1]};
    \node[anchor=west,align=left] at (\colcrp pt,{\jlow/10}) {\fontsize{3.3}{4}\selectfont \crpforecasters[\jlow+1]};
    \node[anchor=west,align=left] at (\colcrp pt,{\jhigh/10}) {\fontsize{3.3}{4}\selectfont \crpforecasters[\jhigh+1]};
  }
\end{tikzpicture}
\vspace{-5pt}
\caption{\small Ranking of top forecasters in the M5 Uncertainty Competition via
  logarithmic and CRP losses. Lines again connect the same forecasters over the
  ranked lists, and orange (cyan) lines identify the forecasters with lowest
  (highest) standardized variance.}
\label{fig:m5-ranking}
\end{figure}

The effects described above may lead, among other things, to:
\begin{enumerate}
\item Chosen forecasts having a systematic tendency to be overly flat, sharp, or
  shifted, depending on the loss function used to choose them;
\item Forecasters being incentivized to report overly flat, sharp, or shifted
  forecasts, again depending on the loss used to evaluate them.
\end{enumerate}

The first consequence above is demonstrated in Figures \ref{tab:covid-ranking}
and \ref{fig:m5-ranking}, which depict results on real data from the Covid-19
Forecast Hub and the M5 Uncertainty Competition
\citep{makridakis2022uncertainty}, respectively. In both cases, forecasters with
high standardized variance (meaning that they produced relatively flat
forecasts) were generally ranked at least as high under logarithmic loss
(meaning that they received relatively low loss) as under CRP loss. Conversely,
CRP loss generally ranked forecasters with low standardized variance at least as
high as logarithmic loss did. Practitioners ought to be aware of the asymmetries
described in this paper pertaining to these and other commonly used loss
functions. The choice of loss function can affect the ranking of forecasts in
predictable ways. The second of the consequences above, detailing incentives
imposed upon the forecasters by different loss functions, is discussed in
Section \ref{sec:hedging}.

\subsection{Summary of main results}

We now summarize the main results of this paper, on asymmetries inherent to
logarithmic, CRP, quadratic, spherical, energy, and Dawid--Sebastiani (DS) losses
(Section \ref{sec:preliminaries} gives their definitions). We denote by
$\ell(F,G)$ the expected loss when $F$ is the forecast and $G$ is the target
distribution. Our first main result (Section \ref{sec:main-scale}), pertains to
scale families.

\medskip
\begin{theorem}[Scale family]
\label{thm:scale-family}
Let $\{G_\sigma:\sigma>0\}$ be a scale family. Fix $G=G_1$ and $\sigma>1$. The
following holds.
\begin{itemize}
\item For CRP and energy losses, $\ell(G_\sigma,G) > \ell(G_{1/\sigma},G)$.
\item For quadratic and DS losses, $\ell(G_\sigma,G) < \ell(G_{1/\sigma},G)$.
\item For spherical loss, $\ell(G_\sigma,G) = \ell(G_{1/\sigma},G)$.
\end{itemize}
\end{theorem}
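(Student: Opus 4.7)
The plan is to handle each of the five losses separately, reducing to a scalar identity or one-variable inequality via the scale structure. Throughout, let $Z_1, Z_1', Z_2$ be iid from $G$, so that $G_\sigma$ is the law of $\sigma Z_1$.

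For CRP and energy losses, I would use the kernel representation $\E_{Y\sim G}[\mathrm{CRPS}(F, Y)] = \E|X - Y| - \tfrac12\E|X - X'|$ (and its Euclidean-norm analogue for energy), where $X, X' \sim F$ are iid. Substituting $F = G_\sigma$ and $F = G_{1/\sigma}$, using $|Z_1/\sigma - Z_2| = \sigma^{-1}|Z_1 - \sigma Z_2|$ together with the exchangeability of $(Z_1, Z_2)$, yields $\ell(G_\sigma, G) = \mu(\sigma) - \tfrac{\sigma}{2}\mu(1)$ and $\ell(G_{1/\sigma}, G) = \sigma^{-1}\mu(\sigma) - \tfrac{1}{2\sigma}\mu(1)$, where $\mu(a) := \E|aZ_1 - Z_2|$. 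The difference factors cleanly as $\ell(G_\sigma, G) - \ell(G_{1/\sigma}, G) = \tfrac{\sigma - 1}{\sigma}\bigl(\mu(\sigma) - \tfrac{\sigma+1}{2}\mu(1)\bigr)$, so for $\sigma > 1$ it suffices to prove $\mu(\sigma) > \tfrac{\sigma+1}{2}\mu(1)$. The key step is a convexity argument: $\mu$ is convex in $a$ as an expectation of a convex function, and its derivative at $a = 1$ satisfies $\mu'(1) = \E[Z_1\,\sign(Z_1 - Z_2)] = \tfrac12\mu(1)$, via the exchangeability identity $2\mu'(1) = \E[(Z_1 - Z_2)\,\sign(Z_1 - Z_2)] = \mu(1)$. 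The tangent-line bound $\mu(\sigma) \ge \mu(1) + \mu'(1)(\sigma - 1)$ then rearranges exactly to $\mu(\sigma) \ge \tfrac{\sigma+1}{2}\mu(1)$, with strictness for $\sigma > 1$ following from strict convexity of $\mu$ when $G$ is non-degenerate.

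For quadratic loss, I would write $\ell(F, G) = \|f - g\|_2^2 - \|g\|_2^2$ and use $f_\sigma(x) = \sigma^{-1}g(x/\sigma)$ to obtain $\|f_\sigma\|_2^2 = \|g\|_2^2/\sigma$ and $\langle f_\sigma, g\rangle = I(\sigma) := \int g(u)\,g(\sigma u)\,du$. A brief computation then gives $\ell(G_{1/\sigma}, G) - \ell(G_\sigma, G) = (\sigma - \sigma^{-1})\|g\|_2^2 - 2(\sigma - 1)\,I(\sigma)$, which is positive by the Cauchy--Schwarz bound $I(\sigma) \le \|g\|_2^2/\sqrt{\sigma}$ combined with the AM--GM gap $\sigma + 1 > 2\sqrt{\sigma}$ for $\sigma > 1$. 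For DS loss, using $\mu_{G_\sigma} = \sigma\mu_G$ and $\sigma_{G_\sigma}^2 = \sigma^2\sigma_G^2$ reduces $\ell(G_{1/\sigma}, G) - \ell(G_\sigma, G)$ to $(\sigma^2 - \sigma^{-2}) - 4\log\sigma$ plus a non-negative mean-mismatch term, and the first part is positive for $\sigma > 1$ since it vanishes at $\sigma = 1$ and has derivative $2(\sigma^2 - 1)^2/\sigma^3 \ge 0$. For spherical loss, direct computation gives $\ell(G_\sigma, G) = -\sqrt{\sigma}\,I(\sigma)/\|g\|_2$, and the substitution $v = u/\sigma$ yields $I(1/\sigma) = \sigma I(\sigma)$, from which $\ell(G_{1/\sigma}, G) = \ell(G_\sigma, G)$ exactly.

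The main obstacle will be the CRP/energy case. The other four losses each reduce to an elementary scalar computation (a Cauchy--Schwarz/AM--GM chain, the inequality $\sigma^2 - \sigma^{-2} > 4\log\sigma$, or a clean cancellation), whereas CRP/energy requires the specific identity $\mu'(1) = \tfrac12\mu(1)$ derived via exchangeability. Without this identity, the convex tangent line at $\sigma = 1$ would not match the boundary curve $\tfrac{\sigma+1}{2}\mu(1)$, and the comparison would not be tight at $\sigma = 1$ with strict separation opening up for $\sigma > 1$.
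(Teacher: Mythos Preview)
Your arguments for quadratic, DS, and spherical losses are correct and take a more hands-on route than the paper. For CRP, energy, and quadratic the paper abstracts everything into one lemma: the induced divergence $d$ is symmetric in its two arguments and \emph{rescalable} in the sense that $d(F_\sigma, G_\tau) = h(\tau)\, d(F_{\sigma/\tau}, G_1)$ for some scaling function $h$; combining these gives $d(G_\sigma, G) = d(G, G_\sigma) = h(\sigma)\, d(G_{1/\sigma}, G)$, so the sign of the asymmetry is read off from whether $h$ is increasing ($h(\sigma)=\sigma^\beta$ for CRP/energy) or decreasing ($h(\sigma)=1/\sigma$ for quadratic). Spherical and DS are then handled by variants of the same idea. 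Your approach instead works directly with each loss's explicit representation, which is more elementary and yields a bit more along the way (e.g., the Cauchy--Schwarz/AM--GM chain for quadratic, the identity $I(1/\sigma)=\sigma I(\sigma)$ for spherical), at the cost of a separate device per loss.

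There is, however, a gap in your treatment of energy loss for $\beta \ne 1$. Redoing your CRP computation with $\|\cdot\|^\beta$ in place of $|\cdot|$ gives the factorization
\[
\ell(G_\sigma, G) - \ell(G_{1/\sigma}, G) = \frac{\sigma^\beta - 1}{\sigma^\beta}\Bigl(\mu(\sigma) - \tfrac{\sigma^\beta+1}{2}\mu(1)\Bigr),
\qquad \mu(a) = \E\|aZ_1 - Z_2\|^\beta,
\]
so you now need $\mu(\sigma) > \tfrac{\sigma^\beta+1}{2}\mu(1)$. The convexity argument fails on both sides: for $\beta < 1$ the pointwise map $a \mapsto \|az_1 - z_2\|^\beta$ is not convex, so the premise of your key step is false; for $\beta > 1$ the exchangeability identity becomes $\mu'(1) = \tfrac{\beta}{2}\mu(1)$, and the resulting tangent bound $\mu(\sigma) \ge \bigl(1 + \tfrac{\beta}{2}(\sigma - 1)\bigr)\mu(1)$ is strictly \emph{weaker} than the target, since $\tfrac{\sigma^\beta+1}{2} > 1 + \tfrac{\beta}{2}(\sigma - 1)$ by convexity of $t \mapsto t^\beta$. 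The fix is already latent in your own formulas: subtracting $\ell(G,G) = \tfrac12\mu(1)$ from your expressions for $\ell(G_\sigma,G)$ and $\ell(G_{1/\sigma},G)$ yields $d(G_{1/\sigma}, G) = \sigma^{-\beta}\, d(G_\sigma, G)$ directly, whence the difference is $(1 - \sigma^{-\beta})\, d(G_\sigma, G) > 0$ by strict propriety. This is exactly the paper's one-line argument, and it renders the convexity detour unnecessary even for CRP: your tangent-line inequality $\mu(\sigma) \ge \tfrac{\sigma+1}{2}\mu(1)$ is nothing other than $d(G_\sigma, G) \ge 0$, i.e., propriety itself. (This also sidesteps your appeal to ``strict convexity of $\mu$'', which can fail for discrete $G$ even though the conclusion remains true.)
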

\smallskip

Supposing the target variable $Y$ follows a distribution $G$, we define
$G_\sigma$ as the distribution of $\sigma Y$. For CRP and energy losses, the
expected loss is larger when forecasting $G_\sigma$ versus
\smash{$G_{1/\sigma}$} for any value of $\sigma > 1$. Recalling $G=G_1$, these
loss functions thus favor underestimating the scale parameter compared to
overestimating it by the same amount, on a logarithmic scale. In other words,
facing a choice between a flat, less informative forecast $G_\sigma$ and a sharp
overconfident forecast \smash{$G_{1/\sigma}$}, CRP and energy losses will always
favor the latter, regardless of the particular scale family. Conversely,
quadratic and DS losses will always favor the former: the flat, less informative
forecast in a scale family. Spherical loss will favor neither, however this
should not be taken at face value, as will become clearer later. For
threshold-weighted CRP loss, the asymmetry depends on the weight function (see
Section \ref{sec:tw-crp}). We find Theorem \ref{thm:scale-family} in agreement
with the empirical findings in Figure \ref{fig:heatmaps-normal}. Importantly,
Figure \ref{fig:heatmaps-covid} presents empirical evidence that a similar
phenomenon can occur when the forecast and target distribution are not of the
same family.

Logarithmic loss is notably absent from Theorem \ref{thm:scale-family}. For this
loss, the direction of asymmetry may differ between scale families. Our second
main result (Section \ref{sec:main-exponential}) characterizes the asymmetry
inherent to logarithmic loss, with respect to exponential families.

\medskip
\begin{theorem}[Exponential family]
\label{thm:exponential-family}
Let $\{G_\theta: \theta >0\}$ be a minimal exponential family. Fix $G=G_1$,
$\theta>1$, and denote by $\ell$ the logarithmic loss. The following holds.
\begin{itemize}
\item If the family is a normal, exponential, Laplace, Weibull, or (generalized)
  gamma scale family, or a log-normal log-scale family, then
  $\ell(G_{\theta},G) < \ell(G_{1/\theta},G)$.
\item If the family is an inverse gamma scale family, then
  $\ell(G_{\theta},G) > \ell(G_{1/\theta},G)$.
\item If the family is a (generalized) gamma, Pareto, inverse Gaussian, or beta
  shape family, or a Poisson rate family, then
  $\ell(G_{\theta},G) > \ell(G_{1/\theta},G)$.
\end{itemize}
\end{theorem}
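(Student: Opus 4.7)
The plan is to reduce the inequality to one between KL divergences and then verify it case by case. Because the log loss satisfies
\[
\ell(F,G) = -\mathbb{E}_{Y\sim G}[\log f(Y)] = H(G) + \KL(G\|F),
\]
where $H(G)$ is the differential entropy of $G$ and does not depend on the forecast, the desired comparison is
\[
\ell(G_\theta, G) - \ell(G_{1/\theta}, G) \;=\; \KL(G\|G_\theta) - \KL(G\|G_{1/\theta}) \;=:\; \phi(\theta).
\]
Since $\phi(1)=0$, it suffices to determine the sign of $\phi(\theta)$ for $\theta>1$ in each of the listed families.

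For any minimal exponential family $g_\theta(y) = h(y)\exp(\eta(\theta)T(y) - A(\theta))$, direct substitution yields
\[
\KL(G_1 \| G_\theta) = \bigl(\eta(1)-\eta(\theta)\bigr)\,\mathbb{E}_{G_1}[T(Y)] - \bigl(A(1)-A(\theta)\bigr),
\]
so $\phi(\theta)$ becomes an explicit elementary function of $\theta$ once the first moment of the sufficient statistic under $G_1$ is identified (which is a standard calculation in every family listed). For the pure scale families (normal, exponential, Laplace, Weibull, gamma, log-normal in the log-scale parameter, and inverse gamma), the change of variables $y \mapsto y/\theta$ inside the KL integral leads in a few lines to an expression of the schematic form $\phi(\theta) = c_1 \log\theta + c_2(\theta^k - \theta^{-k})$, with constants $c_1, c_2, k$ depending on the family; the sign of $c_2$ relative to $c_1$ is what determines which of the three bullets applies. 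The Pareto, beta, and (generalized) gamma shape families, as well as the inverse Gaussian and Poisson rate families, give formulas of the same shape, possibly with an overall sign flip because the parameter acts on the distribution in the opposite direction.

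Given $\phi$ in this form, I would verify the sign uniformly: set $s = \log\theta$ so that $\phi$ is an odd analytic function of $s$, then factor $\phi'(\theta)$ to exhibit a definite sign on $\theta>1$. The prototypical identity is that $\phi'(\theta)$ reduces, after multiplying by a positive monomial in $\theta$, to a constant times $(\theta^k-1)^2$ or $-(\theta^k-1)^2$, from which strict monotonicity of $\phi$ and the three claimed directions follow immediately. As a sanity check, this logic gives $\KL(G_1\|G_\theta) = \theta - 1 - \log\theta$ for the Poisson rate family and $\phi(\theta) = (\theta - 1/\theta) - 2\log\theta > 0$ for $\theta>1$, matching the third bullet. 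The main obstacle lies in the families whose log-partition function involves $\log\Gamma$ or $\psi$, namely the generalized gamma shape family, the beta shape family, and (in their shape-varying incarnations) the gamma and inverse gamma; for these, polynomial manipulation does not suffice and the required sign of $\phi$ must be extracted from standard digamma inequalities together with strict convexity of $\log\Gamma$. Organizing the computations by whether the parameter enters linearly in $\eta$ (scale and rate cases) or via $\log\Gamma$ in $A$ (shape cases) should keep the case analysis manageable.
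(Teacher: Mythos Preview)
Your reduction to the difference of KL (equivalently Bregman) divergences $\phi(\theta) = d_A(\theta,1) - d_A(1/\theta,1)$ is exactly what the paper does, and for families whose log-partition function has the form $A(\eta) = -a\log\eta + b\eta$ your direct computation and factoring of $\phi'$ as $\pm(\theta^k-1)^2/\theta^m$ goes through as you describe. Two points of departure from the paper are worth noting.

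First, the paper replaces your family-by-family computation with a single criterion (their Lemma~\ref{lem:acceleration}): writing $d_A(\theta,1) = \int_1^\theta \int_1^v A''(u)\,du\,dv$ and comparing both Bregman terms against the common benchmark $\int\!\!\int u^{-3}\,du\,dv$ (which satisfies $\int_1^{\theta}\!\int_1^v u^{-3}\,du\,dv = \int_{1/\theta}^1\!\int_v^1 u^{-3}\,du\,dv$), they show that the sign of $\phi$ is governed entirely by whether $u^3 A''(u)$ is increasing, decreasing, or constant. This collapses every case to a one-line check on $A''$, whereas your route requires a fresh factorization of $\phi'$ for each family. Both are valid; theirs is more uniform.

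Second, and more substantively, your handling of the digamma cases (generalized gamma and beta shape families) is the genuine gap in the proposal. Strict convexity of $\log\Gamma$, i.e.\ $\psi'>0$, is not sufficient; what is actually needed is that $x^3\psi'(x)$ is increasing, and for the beta family that $x^3(\psi'(x)-\psi'(x+\beta))$ is increasing. The paper obtains these from the series $\psi'(x)=\sum_{n\ge 0}(x+n)^{-2}$ by direct differentiation, and they are not ``standard digamma inequalities'' one can simply cite. As a minor correction: the inverse gamma \emph{scale} family has $A(\eta)=-k\log\eta$ and belongs in your easy polynomial group, not the digamma group; the reversed inequality in the theorem's second bullet arises because here $\eta=\sigma$ is directly (not inversely) proportional to the scale.
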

\smallskip

In most (but not all) of the examples of scale families that we were able to
find, logarithmic loss favors overestimating the scale rather than
underestimating it by the same amount, on a logarithmic scale as before. This is
in agreement with Figure \ref{fig:heatmaps-normal} (which examines the normal
case) and \ref{fig:heatmaps-covid} (the misspecified case where $F,G$ are not
of the same family). The converse is true for exponential families with
so-called shape parameters, typically right-skewed, in which logarithmic loss
favors underestimating the shape. We believe the empirical results pertaining to
asymmetry in right-skewed location families, which are favored underestimated in
Figure \ref{fig:heatmaps-covid}, may be understood in this light.

We complement the above results on scale and exponential families with a result
on location families.

\medskip
\begin{theorem}[Location family]
\label{thm:location-family}
Let $\{G_\mu:\mu\in\R\}$ be a location family. Fix $G=G_0$. The following holds.
\begin{itemize}
\item For CRP, quadratic, spherical, energy, and DS losses,
  $\ell(G_\mu,G) = \ell(G_{-\mu},G)$.
\item For logarithmic loss, $\ell(G_\mu, G) = \ell(G_{-\mu},G)$, provided $G$ is
  symmetric.
\end{itemize}
\end{theorem}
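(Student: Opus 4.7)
The plan is to reduce $\ell(G_\mu, G)$ for each loss to a form that is manifestly even in $\mu$, exploiting the translation invariance of the location family $g_\mu(x) = g(x-\mu)$. The common pattern, which works for every loss except the logarithmic, is that the expected loss can be expressed either via a double integral over iid copies of $G$ or via an autocorrelation of $g$, both of which force $\mu \mapsto -\mu$ symmetry automatically. This is the reason the logarithmic case alone requires the extra assumption that $G$ be symmetric.

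For CRP loss and, more generally, energy loss with exponent $\beta \in (0,2)$, I would invoke the Gneiting--Raftery kernel representation
$$\ell(F, G) = \E_{X\sim F,\, Y\sim G}|X-Y|^\beta - \tfrac{1}{2}\E_{X,X'\sim F}|X-X'|^\beta.$$
Writing $X = X_0 + \mu$ with $X_0\sim G$ makes the second term $\mu$-free and reduces the first to $\E|Z+\mu|^\beta$ with $Z := X_0 - Y$. Since $X_0, Y$ are iid from $G$, the variable $Z$ is symmetric about zero, so $\E|Z+\mu|^\beta = \E|Z-\mu|^\beta$, yielding $\ell(G_\mu, G)=\ell(G_{-\mu},G)$. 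Note that this argument does not require $G$ itself to be symmetric: the symmetry is manufactured by the iid pairing.

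For quadratic loss, expand $\ell(F,G) = \|f\|_2^2 - 2\E_{Y\sim G}[f(Y)]$; translation invariance gives $\|g_\mu\|_2 = \|g\|_2$, and the remaining term is the autocorrelation $h(\mu) := \int g(y-\mu) g(y)\,dy$, which is even in $\mu$ by an elementary change of variables. Spherical loss is the same argument, since $\ell(F,G) = -\E_{Y\sim G}[f(Y)]/\|f\|_2$ involves the same two ingredients. For Dawid--Sebastiani loss, the forecast mean and variance under $G_\mu$ are $\E_G[Y]+\mu$ and $\Var_G(Y)$, and direct computation gives $\ell(G_\mu,G) = \log\Var_G(Y) + 1 + \mu^2/\Var_G(Y)$, visibly even.

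Logarithmic loss, $\ell(F,G) = -\E_{Y\sim G}[\log f(Y)]$, has no iid-pairing structure, so the symmetry of $Z = X_0 - Y$ is unavailable. Under the assumption that $G$ is symmetric about zero, however, $Y\sim G$ implies $-Y\sim G$ and $g(z)=g(-z)$; substituting $Y\mapsto -Y$ in the expectation gives
$$\ell(G_\mu,G) = -\E_{Y\sim G}[\log g(Y-\mu)] = -\E_{Y\sim G}[\log g(-Y-\mu)] = -\E_{Y\sim G}[\log g(Y+\mu)] = \ell(G_{-\mu},G).$$
The proof is essentially bookkeeping; the one conceptual obstacle worth flagging is noticing that the iid-pairing mechanism is exactly what removes the need for any symmetry assumption on $G$ in the first five losses, and that the logarithmic loss genuinely lacks this feature, which is why $G$ symmetric is both assumed and necessary in general.
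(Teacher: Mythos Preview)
Your proof is correct and takes a genuinely different route from the paper for the non-logarithmic losses. The paper argues abstractly via the induced divergence: it observes that each of CRP, quadratic, and energy loss induces a divergence $d$ that is both symmetric and translation invariant, then applies the two-line computation $d(G_\mu,G)=d(G,G_\mu)=d(G_{-\mu},G)$ (Lemma~\ref{lem:translation-invariant}); for spherical and DS it notes that symmetry of $d$ holds once both arguments lie in the same location family, so the same computation goes through. You instead work directly with each expected loss and exhibit an explicit even-in-$\mu$ expression: the iid-pairing device $Z=X_0-Y$ for CRP/energy, the autocorrelation $\int g(y-\mu)g(y)\,dy$ for quadratic and spherical, and the closed form $\log\Var_G(Y)+1+\mu^2/\Var_G(Y)$ for DS. The paper's approach is more unified and immediately extends to any loss whose divergence is symmetric and translation invariant; your approach is more concrete, avoids introducing the divergence altogether, and nicely isolates \emph{why} no symmetry assumption on $G$ is needed (the symmetry of $Z$ is manufactured by the iid pairing, a point the paper's abstract argument leaves implicit). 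For the logarithmic loss your argument and the paper's are essentially identical. One small notational point: in the energy case with $\cY\subseteq\R^d$ you should write $\|\cdot\|_2^\beta$ rather than $|\cdot|^\beta$, though the argument is unchanged.
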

\smallskip

Supposing the target variable $Y$ follows a distribution $G$, we define
$G_\mu$ as the distribution of $Y+\mu$. CRP, quadratic, spherical, energy, and
DS losses do not favor underestimating nor overestimating the location, in any
location family. Logarithmic loss behaves the same way, provided $G$ is
symmetric. For threshold-weighted CRP loss, the asymmetry depends on the weight
function (see Section \ref{sec:tw-crp}). These results are in line with Figure
\ref{fig:heatmaps-normal}, and Figure \ref{fig:heatmaps-covid} again provides
empirical evidence that the same can be true when the forecast and target
distribution are not of the same family (recall, the asymmetry for logarithmic
loss with respect to location can be interpreted via Theorem
\ref{thm:exponential-family}).

The results in Theorems \ref{thm:scale-family}--\ref{thm:location-family} still
hold when we replace each $\ell(F,G)$ by the divergence
$d(F,G) = \ell(F,G) - \ell(G,G)$. Common proper losses induce well-known
divergences, such as the Kullback--Leibler divergence (which is induced by
logarithmic loss), Cramér distance (CRP loss), energy distance (energy loss),
$L^2$ distance between densities (quadratic loss), and so on. These divergences
are widely-used in probability, statistics, and many application
areas. Therefore, a characterization of asymmetries, as we give in this paper,
may be of interest outside of probabilistic forecasting.

\subsection{Structure of this paper}

The rest of this paper is structured as follows. In Section
\ref{sec:preliminaries}, we formally introduce the setting of probabilistic
forecasting and the loss functions that are discussed in this paper. In Section
\ref{sec:main-results}, we derive our main theoretical results regarding
asymmetric penalties. In Section \ref{sec:empirical-results}, we present
empirical analyses of three real data sets, as well as synthetic data. In
Section \ref{sec:hedging}, we leverage our results to shed light on hedging
proper loss functions under distribution shift. In Section \ref{sec:discussion},
we conclude with a discussion of our findings, some assorted additional results,
and ideas for future work.

\subsection{Related literature}

References to the broader literature on proper losses will be given in the next
section, and here we restrict our attention to literature more narrowly adjacent
to the focus of our paper. We are of course not the first to consider the
operating characteristics of proper losses, and how this might influence the
choice of which loss to use. This has been studied in probabilistic
classification by \citet{buja2005loss}, and in the context of
eliciting forecasts from experts by \citet{carvalho2016overview,
  merkle2013choosing, bickel2007some, machete2013contrasting}.
\citet{wheatcroft2021evaluating} articulated well the need for studying what
forecasting errors different losses favor, and left it for future work. In the
context of point forecasts, the same has been emphasized by
\citet{ehm2016quantiles} and \citet{patton2020comparing}. The work of
\citet{machete2013contrasting} adopts a broadly similar approach to ours, though
in a different setting, and reaches different conclusions. Recently,
\citet{resin2024shift} obtained a decomposition of the expected CRP loss into
shift and dispersion components. Our work may be viewed as complementing theirs,
toward an understanding of the behavior of CRP loss under differences in
location and scale between the forecast and target distribution.

\section{Preliminaries}
\label{sec:preliminaries}

We now introduce the formal setting which we study in this paper. Suppose that
$Y$ is a random variable taking values in an outcome space $\cY$, according to
some probability distribution $G$. We call $Y$ the \emph{target variable} and
$G$ the \emph{target distribution}. Letting $\cF$ denote a set of probability
distributions on $\cY$, to measure how close a \emph{forecast} $F\in\cF$ is to
the target distribution, we define a \emph{loss function} $\ell:\cF\times\cY
\to [-\infty,\infty]$, such that $\ell(F,y)$ is the loss incurred for a
forecast $F$ of the observation $y$. The forecast $F$ may be given in the form
of a measure, density, cumulative distribution function, quantiles, or
otherwise; different loss functions are defined in terms of different forms of
inputs, in this regard. To keep the setting general and clear, we use the
general word \emph{distribution} to describe the forecast $F$ and target $G$ so
as not to prefer any particular form. We assume that low \emph{expected loss}
indicates a good forecast, the expected loss defined as
\[
\ell(F,G) = \E\ell(F,Y), \quad \text{for $Y \sim G$}.
\]
(It will be clear from the context whether $\ell$ denotes the loss or expected
loss.) A loss is said to be \emph{proper}, if
\[
\ell(F,G) \geq \ell(G,G), \quad \text{for any $F,G$}.
\]
In other words, if the target distribution itself minimizes the expected loss. A
proper loss is said to be \emph{strictly proper} if strict inequality holds in
the above whenever $F \not= G$. Any proper loss induces a divergence
\[
d(F,G) = \ell(F,G)-\ell(G,G),
\]
which is nonnegative, and vanishes when the forecast $F$ equals the target
distribution $G$. We emphasize that in these definitions, and in our study in
general, the forecast $F$ is not treated as random, but as a fixed probability
distribution.

\subsection{Loss functions}

We now introduce the loss functions studied in this paper. We refer the reader
to \citet{gneiting2007strictly} for a more comprehensive review of loss
functions in probabilistic forecasting.

\textbf{Logarithmic loss.} When the outcome space $\cY$ is a convex subset of
the finite-dimensional Euclidean space $\R^d$, and $F,G$ are absolutely
continuous with respect to the Lebesgue measure, we let $f,g$ denote their
respective densities. One of the earliest examples of proper losses, logarithmic
loss \citep{good1952rational} is defined as the negative log-likelihood:
\[
\ell(F,y) = -\log f(y).
\]
Log loss is strictly proper if restricted to the family of densities with finite
Shannon entropy (i.e., $\ell(F,F)<\infty$), and it induces the well-known
Kullback--Leibler (KL) divergence \citep{kullback1951information}:
\[
d(F,G) = \int_\cY g(y) \log\frac{g(y)}{f(y)} dy
= \E \bigg[\log\frac{g(Y)}{f(Y)}\bigg] = \KL(g\|f),
\]
recalling that $Y$ is a random variable with distribution $G$. Logarithmic loss
may be defined on a general outcome space $\cY$, given that $F,G$ are both
absolutely continuous with respect to a common measure on $\cY$, but the results
in this paper pertain to Euclidean outcome spaces, as defined above. Log loss is
commonly used in epidemiology \citep{reich2019collaborative} and in eliciting
probabilities from experts \citep{carvalho2016overview}.

\textbf{Quadratic loss.} In the same setting as logarithmic loss, quadratic loss
\citep{brier1950verification} is affine in the likelihood:
\[
\ell(F,y) = -2f(y) + \int_\cY f(x)^2 dx,
\]
assuming the density $f$ is square integrable. The integral term constitutes a
penalty for low entropy. Quadratic loss is strictly proper. It is worth noting
that if the integral penalty is omitted, then the loss becomes improper. The
induced divergence is the squared $L^2$ distance between the densities:
\[
d(F,G) = \int_\cY (f(y) - g(y))^2 dy.
\]
Quadratic loss is sometimes used in the setting of forecast aggregation
\citep{hora2015calibration}. More recently, it has been used to evaluate
estimated distributions of network traffic features
\citep{dietmuller2024fitnets}.

\textbf{Spherical loss.} In the same setting as quadratic losses, spherical loss
\citep{thornton1965belief} is now linear in the likelihood:
\[
\ell(F,y) = -f(y) \bigg(\int_\cY f(x)^2 dx\bigg)^{-\frac{1}{2}},
\]
assuming the density is square integrable. Similar to quadratic loss, the
integral term here constitutes a penalty for low entropy. Spherical loss is
strictly proper, and the induced divergence is
\[
d(F,G) = \bigg(\int_\cY g(y)^2 dy\bigg)^{\frac{1}{2}} - \bigg(\int_\cY f(y)^2
dy\bigg)^{-\frac{1}{2}} \int_\cY f(y)g(y) dy.
\]
Spherical loss is a special case of pseudospherical loss
\citep{good1971comment}. This has recently gained popularity in machine
learning, with \citet{lantao2021pseudo} deriving an efficient approach for
fitting for energy-based models via minimization of pseudospherical loss
functions. \citet{lee2022pseudo} proposed using a pseudospherical loss in
knowledge distillation, whose goal is to transfer knowledge from a larger to a
smaller model.

\textbf{Continuous ranked probability (CRP) loss.} Unlike logarithmic,
quadratic, and spherical losses, we now no longer require the existence of
densities. In the case that the outcome space $\cY$ is a convex subset of the
real line $\R$, we identify $F$ with a cumulative distribution
function. Continuous ranked probability (CRP) loss \citep{matheson1976scoring}
is defined by
\[
\ell(F,y) = \int_\cY (F(x) - \mathbb{I}\{y \leq x\})^2 dx.
\]
where $\mathbb{I}\{y \leq x\}$ is equal to 1 if $y \leq x$ and 0 otherwise. This
has the alternative representation \citep{baringhaus2004new, szekely2005new}:
\[
\ell(F,y) = \E|X-X'|\bigg(\frac{\E |X-y|}{\E |X-X'|} - \frac{1}{2}\bigg),
\]
where $X,X' \sim F$ are independent. The multiplier $\E|X-X'|$ outside of the
parentheses serves as a penalty for high entropy (in contrast to quadratic and
spherical losses), as the term inside the parentheses is scale invariant. CRP
loss is strictly proper if restricted to the family of distributions with finite
first moment. It induces the Cramér divergence \citep{cramer1928composition}:
\[
d(F,G) = \int_\cY (F(y) - G(y))^2dy.
\]
These properties of CRP loss---that it does not require a forecast to have
a density, and has a representation in terms of expectations---make it quite
popular. In particular, CRP loss is widely used in atmospheric sciences
\citep{gneiting2005calibrated, scheuerer2015probabilistic, rasp2018neural,
  kochkov2024neural, clement2024distributional}, and features in isotonic
distributional regression \citep{henzi2021isotonic}. In addition, CRP loss has
another alternative representation as an integrated loss in terms of the
quantile function $F^{-1}$ \citep{laio2007verification}. This provides a close
connection to (weighted) interval score, which has recently become popular in
epidemic forecasting \citep{bracher2021evaluating}.

\textbf{Threshold-weighted CRP loss.} A threshold-weighted version of CRP loss
\citep{matheson1976scoring, gneiting2011comparing} generalizes CRP loss by
introducing a nonnegative integrable weight function $w:\cY \to [0,\infty)$, via
\[
\ell(F,y) = \int_\cY w(x) (F(x) - \mathbb{I}\{y \geq x\})^2 dx.
\]
This has the alternative representation in terms of expectations:
\[
\ell(F,y) = \E|v(X)-v(X')|\bigg(\frac{\E |v(X)-v(y)|}{\E |v(X)-v(X')|} -
\frac{1}{2}\bigg),
\]
where $X,X'\sim F$ are independent, and where $v$ is any antiderivative of $w$
(i.e., $v'=w$). Threshold-weighted CRP loss is strictly proper if restricted to
the family of distributions with finite first moment, and restricted to a
strictly positive weight function. It induces the weighted Cramér divergence:
\[
d(F,G) = \int_\cY w(y) (F(y) - G(y))^2 dy.
\]
Threshold-weighted CRP loss is used in the forecasting of high-impact events,
for example, in meteorological sciences \citep{allen2023weighted,
  taillardat2023evaluating}. We refer to the case where the weight function is
a power, $w(y)=y^\alpha$ for $\alpha\in\R$, as a power-weighted CRP loss.

\textbf{Energy loss.} Generalizing CRP in a different direction is the energy
loss \citep{gneiting2007strictly}, which extends the expectation formulation of
CRP to a multidimensional outcome space $\cY\subseteq\R^d$, using a parameter
$\beta \in (0,2)$, via
\[
\ell(F,y) = \E \|X-X'\|_2^\beta\bigg(\frac{\E \|X-y\|_2^\beta}{\E
  \|X-X'\|_2^\beta} - \frac{1}{2}\bigg),
\]
where $X,X' \sim F$ are independent, and $\|\cdot\|_2$ denotes the Euclidean
norm. Note that CRP loss is given by the special case where $d=\beta=1$. As with
CRP loss, the multiplier outside of the parentheses may be seen as a penalty
for high entropy. Energy loss is strictly proper if restricted to the family of
distributions with finite $\beta$-moment. The induced divergence is the
generalized energy distance:
\[
d(F,G) = \E\|X-Y\|_2^\beta - \frac{1}{2}\bigg(\E\|X-X'\|_2^\beta + \E
  \|Y-Y'\|_2^\beta\bigg),
\]
which is related to energy statistics \citep{szekely2013energy}. Here,
$X,X',Y,Y'$ are independent random variables having distributions $F,F,G,G$,
respectively. As a generalization of CRP loss, energy loss is employed in many
of the same application areas and also admits weighted versions that are used to
evaluate, for example, forecasts of extreme meteorological events
\citep{allen2023evaluating}.

\textbf{Dawid--Sebastiani (DS) loss.} Introduced in \citet{dawid1999coherent}
originally for non-forecasting purposes, the Dawid--Sebastiani (DS) loss is
given for a real-valued outcome space $\cY \subseteq \R$ by
\[
\ell(F, y) = \log\Var(X) + \frac{(y - \E X)^2}{\Var(X)},
\]
where $X \sim F$, assumed to have finite variance. It is, up to additive and
multiplicative constants, the logarithmic loss when the input is normally
distributed with the same mean and variance as the forecast $F$. Being dependent
only on the forecast mean and variance, it is proper but not strictly
proper. The induced divergence is
\[
d(F, G) = \log\frac{\Var(X)}{\Var(Y)}+ \frac{\Var(Y) - \Var(X) + (\E Y - \E
  X)^2}{\Var(X)},
\]
where recall $Y \sim G$. This identifies with the divergence induced by
logarithmic loss when both the forecast and target distributions are normal,
with mean and variance equal to those of $F$ and $G$, respectively. DS loss (or
its multivariate extension) have been used as approximations to logarithmic and
energy losses, in meteorology \citep{feldmann2015spatial,
scheuerer2015variogram}, epidemiology \citep{meyer2014power} and economics
\citep{lerch2017forecaster}.

\subsection{More background and commentary}

A loss $\ell$ is said to be \emph{local} if $\ell(F,y)$ depends only on the
observed target $y$, and the density evaluated at the target $f(y)$. Up to
affine transformations, logarithmic loss is the only proper loss that is local,
for outcome spaces with more than two elements
\citep{bernardo1979expected,parry2012proper}. Some authors argue that locality
is a desirable property on philosophical grounds---a local loss depends only on
target values that were realized. On the other hand, proper losses that
naturally derive from decision making tasks are often non-local
\citep{dawid2007geometry}. In some situations, it can be demanding or even
intractable to compute a function of the entire distribution. Indeed,
\citet{shao2024language} state that in language modeling the logarithmic loss
is almost exclusively used, due in part to its desirable characteristics and in
part to its tractability given the notoriously large sample space.

The log loss can be infinite if the forecast fails to attribute positive
density to the observed outcome. Proponents of this property argue it elicits
careful assessment of all possible events, no matter how unlikely they are
\citep{benedetti2010scoring}, while opponents point to the difficulty and
potential arbitrariness in assigning very small probabilities simply to avoid an
infinite loss \citep{selten1998axiomatic}.

In weather and climate applications, it is common to form a probabilistic
ensemble forecast out of point predictions from numerical simulation models with
different initial conditions \citep{leutbecher2008ensemble}. CRP loss is
widespread in those fields, with extensive use that can be partially explained
by the convenience it provides: when CRP is represented in its alternative
expectation-based form, the forecast $F$ can be readily replaced by
sample averages over the members of an ensemble, or the output of a Markov
chain Monte Carlo procedure \citep{gneiting2007probabilistic,
allen2023weighted}. Compared to the logarithmic loss, a further reason for using
CRP is that it retains propriety when straightforward and natural weighting
schemes are applied to emphasize certain events \citep{matheson1976scoring,
gneiting2011comparing}, whereas the logarithmic loss does not
\citep{amisano2007comparing}, and weighting schemes that maintain propriety for
log loss can be less intuitive.

\section{Main results}
\label{sec:main-results}

We now turn to proving the main results, dividing our presentation on scale
families (Section \ref{sec:main-scale}), location families (Section
\ref{sec:main-location}), and  exponential families (Section
\ref{sec:main-exponential}).

\subsection{Scale families}
\label{sec:main-scale}

For a scale family $\{G_\sigma:\sigma>0\}$, in order to study asymmetry inherent
to a loss $\ell$ (whether $\ell$ favors overestimating or underestimating the
scale $\sigma$), we impose two conditions on the associated divergence
$d$. First we assume $d$ is \emph{symmetric}, which means it is invariant to
the order of its arguments, that is,
\[
d(F,G) = d(G,F), \quad \text{for all $F,G$}.
\]
It is clear from their definitions in Section \ref{sec:preliminaries} that the
divergences induced by quadratic, CRP, threshold-weighted CRP, and energy losses
are symmetric, whereas those induced by spherical, DS, and logarithmic losses
are not. Spherical and DS losses deviate from symmetry in ways that Theorem
\ref{thm:scale-family}---proved in this subsection---is able to accommodate,
while logarithmic loss does not and is left for later treatment.

The second requirement is that there exists a function $h: (0,\infty) \to
(0,\infty)$ such that
\[
d(F_\sigma,G_\tau) = h(\tau) d(F_{\sigma/\tau},G_1), \quad \text{for all
  $\sigma,\tau>0$},
\]
and for all scale families $\{F_\sigma:\sigma>0\}, \{G_\sigma:\sigma>0\}$. This
says that the forecast and target distributions can be jointly rescaled by the
same factor, at the cost of a multiplicative element applied to the
divergence. We call divergences which satisfy this condition \emph{rescalable},
and we call $h$ the \emph{scaling} function. Note that $h$ must always satisfy
$h(1)=1$.

Interestingly, the only possible continuous scaling functions $h$ are real
powers: $h(\sigma)=\sigma^\gamma$, for some $\gamma\in\R$. Indeed, if we equate
rescaling by $\sigma\tau$ to rescaling by $\sigma$ and then $\tau$, we
obtain the identity $h(\sigma\tau)=h(\sigma)h(\tau)$, for $\sigma,\tau>0$. This
is often called Pexider's equation, and subject to $h(1)=1$, the only continuous
functions which satisfy this equation are real powers
\citep{small2007functional}.

The following lemma summarizes the rescalability properties of the loss
functions discussed in this paper.

\medskip
\begin{lemma}
\label{lem:rescalability}
The following holds.
\begin{itemize}
\item CRP loss induces a rescalable divergence, with $h(\sigma)=\sigma$.
\item Energy loss induces a rescalable divergence, with
  $h(\sigma)=\sigma^\beta$.
\item Quadratic loss induces a rescalable divergence, with
  $h(\sigma)=1/\sigma$.
\item Logarithmic and DS losses induce rescalable divergences, with
  $h(\sigma)=1$.
\item Power-weighted CRP loss ($w(y) = y^\alpha$) induces a rescalable
  divergence, with $h(\sigma)=\sigma^{\alpha+1}$.
\end{itemize}
\end{lemma}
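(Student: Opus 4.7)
The plan is to verify each rescaling identity by a single change of variables that absorbs the $\tau$-rescaling of the target back into the forecast. For a scale family, the CDF satisfies $G_\tau(y) = G_1(y/\tau)$ and, when a density exists, $g_\tau(y) = (1/\tau) g_1(y/\tau)$; equivalently, if $Y_1 \sim G_1$ then $\tau Y_1 \sim G_\tau$. In each case I would plug these identities into the divergence formula for $d(F_\sigma, G_\tau)$ and substitute $y = \tau u$ (or rescale the random variable by $\tau$ in the expectation-based formulas). After this, the target reverts to $G_1$ and the forecast scale becomes $\sigma/\tau$, leaving only a power of $\tau$ as a multiplicative prefactor, which I read off as $h(\tau)$.

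For CRP loss, the substitution $y = \tau u$ in $\int (F_\sigma(y) - G_\tau(y))^2 dy$ uses $F_\sigma(\tau u) = F_1(\tau u/\sigma) = F_{\sigma/\tau}(u)$ and contributes a Jacobian $\tau$, giving $h(\tau) = \tau$. For the power-weighted version with $w(y) = y^\alpha$, the same substitution introduces an extra $(\tau u)^\alpha$, contributing $\tau^\alpha$ and yielding $h(\tau) = \tau^{\alpha+1}$. For energy loss, using the expectation-based divergence formula, I write $X = \sigma X_1$ and $Y = \tau Y_1$ and pull out a factor of $\tau^\beta$ from each norm via $\|\sigma X_1 - \tau Y_1\|_2^\beta = \tau^\beta \|(\sigma/\tau) X_1 - Y_1\|_2^\beta$, giving $h(\tau) = \tau^\beta$.

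The three remaining losses are density- or moment-based and follow the same template. For quadratic loss, the substitution produces forecast density $(1/\sigma) f_1(\tau u/\sigma) = (1/\tau) f_{\sigma/\tau}(u)$ and target density $(1/\tau) g_1(u)$, so the integrand carries $1/\tau^2$; combined with the Jacobian $\tau$, this yields $h(\tau) = 1/\tau$. For logarithmic loss, the explicit $1/\tau$ factors inside the logarithm of $g_\tau/f_\sigma$ cancel, and the outer $1/\tau$ from $g_\tau$ is absorbed by the Jacobian, so $h(\tau) = 1$. For DS loss, since $\E(\tau Y_1) = \tau \E Y_1$ and $\Var(\tau Y_1) = \tau^2 \Var Y_1$, every term in the divergence formula is homogeneous in $(\sigma,\tau)$ in such a way that only the ratio $\sigma/\tau$ remains, again giving $h(\tau) = 1$.

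Each verification is essentially a one-line calculation; the main things to watch out for are consistency of the scale-family convention (whether $F_\sigma$ denotes the law of $\sigma X_1$ or of $X_1/\sigma$) and the alignment of arguments in the asymmetric divergence formulas, since quadratic and logarithmic place the forecast and target in different roles. No serious analytic obstacle arises: the result is really just a bookkeeping of the powers of $\tau$ produced by the substitution, which also explains why spherical loss is absent from the lemma, as the $L^2$-normalization in its divergence produces a genuine $\sigma$-dependence that cannot be collapsed into a function of $\tau$ alone.
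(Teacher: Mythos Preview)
Your proposal is correct and is exactly the approach the paper indicates: the paper states that ``the proof in each case is by change of variables and is omitted,'' and you have spelled out precisely that change-of-variables bookkeeping for each loss. Your side remark on why spherical loss is excluded is also consistent with the paper's later treatment in the proof of Theorem~\ref{thm:scale-family}.
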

\smallskip

The proof in each case is by change of variables and is omitted.

From the perspective of Theorem \ref{thm:scale-family}, what shall matter is
whether the scaling $h$ is an increasing, decreasing or constant function. To be
clear, we say that $h$ is increasing if $h(x)<h(y)$ for all $0<x<y$, and
decreasing if $-h$ is increasing. The divergences induced by CRP, energy, and
power-weighted CRP (with $\alpha>-1$) losses have an increasing scaling
function, and the ones induced by quadratic and power-weighted CRP (with
$\alpha<-1$) losses have a decreasing scaling function. The divergences induced
by DS, logarithmic and power-weighted CRP (when $\alpha=-1$) losses have
constant scaling, so we may call them \emph{scale invariant}.

A key step for the proof of Theorem \ref{thm:scale-family} is given in the next
lemma, which characterizes whether a loss function favors underestimating the
scale, overestimating it, or neither, based on symmetry and rescalability of the
induced divergence.

\medskip
\begin{lemma}
\label{lem:scale-asymmetry}
Let $\{G_\sigma:\sigma>0\}$ be a scale family, $\ell$ a loss function, and
$d$ the induced divergence. If $d$ is symmetric and rescalable with scaling
function $h$, then the following holds for $G=G_1$ and all $\sigma>1$.
\begin{itemize}
\item If $h$ is increasing, then $\ell(G_{\sigma}, G) > \ell(G_{1/\sigma}, G)$.
\item If $h$ is decreasing, then $\ell(G_{\sigma}, G) < \ell(G_{1/\sigma}, G)$.
\item If $h$ is constant, then $\ell(G_{\sigma}, G) = \ell(G_{1/\sigma}, G)$.
\end{itemize}
\end{lemma}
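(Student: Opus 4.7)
The plan is to reduce everything to a single application of symmetry composed with rescalability, using the elementary identity $\ell(F,G) - \ell(F',G) = d(F,G) - d(F',G)$.

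\textbf{Setup.} Since $\ell(G,G)$ depends only on $G$, subtracting the two expected losses I want to compare gives
\[
\ell(G_\sigma, G) - \ell(G_{1/\sigma}, G) = d(G_\sigma, G_1) - d(G_{1/\sigma}, G_1).
\]
So the whole problem reduces to comparing the divergences $d(G_\sigma, G_1)$ and $d(G_{1/\sigma}, G_1)$ in terms of the scaling function $h$.

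\textbf{The key manipulation.} I would apply symmetry to flip the arguments of the second divergence, and then use rescalability (with the scale family $\{G_\sigma\}$ taken for both $F$ and $G$) with $\tau = 1/\sigma$:
\[
d(G_{1/\sigma}, G_1) \;=\; d(G_1, G_{1/\sigma}) \;=\; h(1/\sigma)\, d(G_{1/(1/\sigma)}, G_1) \;=\; h(1/\sigma)\, d(G_\sigma, G_1).
\]
Plugging this back yields the clean identity
\[
\ell(G_\sigma, G) - \ell(G_{1/\sigma}, G) \;=\; \bigl(1 - h(1/\sigma)\bigr)\, d(G_\sigma, G_1).
\]

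\textbf{Concluding.} Since $d$ is a proper divergence, $d(G_\sigma, G_1) \geq 0$, and it is strictly positive whenever $G_\sigma \neq G_1$ (which I would note holds under the mild standing assumption that the family and loss are nondegenerate, i.e., strict propriety on the relevant family). Using $h(1) = 1$, the sign of $1 - h(1/\sigma)$ is then determined by the monotonicity of $h$ for $\sigma > 1$: positive if $h$ is increasing, negative if $h$ is decreasing, and zero if $h$ is constant. This gives the three cases of the lemma.

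\textbf{Expected obstacle.} The argument itself is short; the only subtlety is ensuring the strict inequalities, which requires $d(G_\sigma, G_1) > 0$. This is not explicit in the hypotheses but follows from strict propriety of the loss on the family under consideration (which each loss in Theorem \ref{thm:scale-family} satisfies on its natural domain, per Section \ref{sec:preliminaries}); I would add a one-line remark to this effect rather than try to weaken the hypotheses further.
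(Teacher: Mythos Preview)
Your proof is correct and is essentially the same as the paper's: both apply symmetry followed by rescalability in a single step, the only cosmetic difference being that the paper manipulates $d(G_\sigma,G)=d(G,G_\sigma)=h(\sigma)\,d(G_{1/\sigma},G)$ (yielding the factor $h(\sigma)-1$) whereas you manipulate $d(G_{1/\sigma},G)$ and obtain the equivalent factor $1-h(1/\sigma)$. Your remark about needing $d(G_\sigma,G_1)>0$ for the strict inequalities is well-taken; the paper's argument implicitly relies on the same positivity without comment.
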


\begin{proof}
Using symmetry then rescalability, we compute:
\[
d(G_{\sigma}, G) = d(G, G_{\sigma}) = h(\sigma) d(G_{1/\sigma}, G).
\]
Suppose that the scaling $h$ is increasing, so that $h(\sigma)>h(1)=1$. It
follows that $d(G_\sigma, G) > d(G_{1/\sigma}, G)$, from which we conclude
$\ell(G_{\sigma}, G) - \ell(G_{1/\sigma}, G) = d(G_{\sigma}, G) -
d(G_{1/\sigma}, G) > 0$. The cases where the scaling $h$ is decreasing or
constant follow similarly.
\end{proof}

The proof of Theorem \ref{thm:scale-family} is now just a matter of putting
together Lemmas \ref{lem:rescalability} and \ref{lem:scale-asymmetry}, with some
additional work along the same lines to accommodate spherical and DS losses.

\begin{proof}[Proof of Theorem \ref{thm:scale-family}]
CRP and energy losses are symmetric and by Lemma \ref{lem:rescalability}
they are rescalable with increasing scaling. Hence Lemma
\ref{lem:scale-asymmetry} implies $\ell(G_\sigma, G)>\ell(G_{1/\sigma},G)$
as required. Meanwhile, quadratic loss is symmetric and by Lemma
\ref{lem:rescalability} it is rescalable with decreasing scaling, so by
Lemma \ref{lem:scale-asymmetry} we conclude that $\ell(G_{\sigma}, G) <
\ell(G_{1/\sigma}, G)$, as required.

Suppose now that $\ell$ is the spherical loss. The induced divergence $d$ is
neither symmetric nor rescalable. However, when the forecast and target
distribution are members of the same scale family, we have the following two
properties. First, a condition similar to rescalability holds, with decreasing
scaling:
\[
d(G_\sigma, G_\tau) = \frac{1}{\sqrt{\tau}} d(G_{\sigma/\tau}, G).
\]
This may be shown using a change of variables. Second, a property related to
symmetry holds:
\[
d(G_{\sigma}, G_\tau) = \sqrt{\frac{\sigma}{\tau}}d(G_\tau, G_\sigma).
\]
Interestingly, the effects of the two multiplicative factors above cancel each
other, resulting in $d(G_\sigma, G) = \sqrt{\sigma} d(G,G_\sigma) =
d(G_{1/\sigma},G)$, and thus $\ell(G_\sigma, G)=\ell(G_{1/\sigma},G)$, as
required.

Finally, for $\ell$ being DS loss, and $d$ the associated divergence, we may
check by direct differentiation that
\[
d(G_{1/\sigma}, G) - d(G_{\sigma}, G) = \bigg(\sigma^2 -
  \frac{1}{\sigma^2}\bigg) - 2\log\sigma > 0.
\]
Consequently, $\ell(G_{\sigma}, G) < \ell(G_{1/\sigma}, G)$, as required. This
completes the proof.
\end{proof}

\subsection{Location families}
\label{sec:main-location}

The argument for a location family $\{G_\mu:\mu\in\R\}$ is broadly similar to
that given in the previous subsection for a scale family. We assume that the
divergence $d$ induced by the given loss $\ell$ is symmetric as well as
\emph{translation invariant}:
\[
d(F_\mu, G_\nu)=d(F_{\mu-\nu}, G_0), \quad \text{for all $\mu,\nu \in \R$},
\]
and for all location families $\{F_\mu:\mu\in\R\}, \{G_\mu:\mu\in\R\}$. It is
clear from their definitions in Section \ref{sec:preliminaries} that the
divergences induced by quadratic, CRP, energy, spherical, DS, and logarithmic
losses are translation invariant.

The next lemma is similar to Lemma \ref{lem:scale-asymmetry} and is key to
the proof of Theorem \ref{thm:location-family}.

\medskip
\begin{lemma}
\label{lem:translation-invariant}
Let $\{G_\mu:\mu\in\R\}$ be a location family, $\ell$ a loss function, and $d$
the induced divergence. If $d$ is symmetric and translation invariant, then
$\ell(G_\mu, G) = \ell(G_{-\mu},G)$ for $G=G_0$ and all $\mu\in\R$.
\end{lemma}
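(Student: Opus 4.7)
The plan is to follow the exact template of Lemma \ref{lem:scale-asymmetry}, but substituting translation invariance for rescalability. The key insight is that applying symmetry followed by a translation gives us a direct identification of the divergences at $\mu$ and $-\mu$, from which the conclusion about $\ell$ follows immediately via $\ell(F,G) = d(F,G) + \ell(G,G)$.

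More concretely, setting $G = G_0$, I would first invoke symmetry of $d$ to write $d(G_\mu, G_0) = d(G_0, G_\mu)$. Then I would apply translation invariance with the shift $-\mu$, viewed as the assumption $d(F_\nu, G_\tau) = d(F_{\nu-\tau}, G_0)$ applied to $F_\nu = G_0$ and $G_\tau = G_\mu$: this gives $d(G_0, G_\mu) = d(G_{-\mu}, G_0)$. Chaining these two equalities yields $d(G_\mu, G_0) = d(G_{-\mu}, G_0)$. Finally, since $\ell(F,G) - \ell(G,G) = d(F,G)$ and the second term does not depend on $F$, subtracting out the common $\ell(G,G)$ transfers the equality from the divergences to the expected losses, yielding $\ell(G_\mu, G) = \ell(G_{-\mu}, G)$.

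The only minor subtlety is being careful with the indexing in the translation-invariance condition. The stated condition is phrased for two possibly different location families $\{F_\mu\}, \{G_\mu\}$, so I need to take both to be the same family $\{G_\mu\}$, and pick the shift $-\mu$ so that the right-hand side lands at $G_0$. Other than this bookkeeping, no additional work is needed — there is no analog of the three-way case split from Lemma \ref{lem:scale-asymmetry} because translation invariance plays the role of a scaling function that is identically $1$, so only the ``constant'' case arises. Hence the proof is expected to be two or three lines, with no real obstacle.
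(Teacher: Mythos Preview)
Your proposal is correct and matches the paper's proof essentially line for line: apply symmetry, then translation invariance, to get $d(G_\mu,G)=d(G,G_\mu)=d(G_{-\mu},G)$, and conclude via $\ell(G_\mu,G)-\ell(G_{-\mu},G)=d(G_\mu,G)-d(G_{-\mu},G)=0$.
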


\begin{proof}
Using symmetry then translation invariance, $d(G_\mu, G) = d(G, G_\mu) =
d(G_{-\mu}, G)$. We conclude $\ell(G_\mu, G) - \ell(G_{-\mu}, G) = d(G_{\mu}, G)
- d(G_{-\mu}, G) = 0$.
\end{proof}

We now prove Theorem \ref{thm:location-family}.

\begin{proof}[Proof of Theorem \ref{thm:location-family}]
For quadratic, CRP, and energy losses, the associated divergences are symmetric
and translation invariant, so application of Lemma
\ref{lem:translation-invariant}
leads to the desired results.

The divergences induced by spherical and DS losses are translation invariant,
and furthermore they are symmetric when restricted to the case where the
forecast and target distribution both belong to the same location
family. Therefore the same argument as in Lemma \ref{lem:translation-invariant}
gives the desired result for spherical and DS losses.

For logarithmic loss, let $g$ be the Lebesgue density of $G$. Recall $g$ is
itself assumed to be symmetric, and by translation  invariance we may assume,
without loss of generality, that $g$ is symmetric around zero, i.e.,
$g(y)=g(-y)$. By a change of variables, followed by symmetry,
\[
d(G_\mu, G) = \int g(y)\log\frac{g(y)}{g(y-\mu)} dy = \int
g(-y)\log\frac{g(-y)}{g(-y-\mu)} dy = \int g(y)\log\frac{g(y)}{g(y+\mu)}dy =
d(G_{-\mu}, G).
\]
We conclude that $\ell(G_\mu, G)=\ell(G_{-\mu}, G)$, as required.
\end{proof}

\subsection{Exponential families}
\label{sec:main-exponential}

For logarithmic loss, the arguments given in Section \ref{sec:main-scale}, which
are based on symmetry of the divergence, do not apply. Indeed, the KL
divergence, which is the divergence associated with log loss and is not
symmetric in its arguments, can behave in subtle ways for scale families. For
the Cauchy scale family, log loss is actually symmetric in its penalty for
$\sigma$ versus $1/\sigma$ (this follows from \citealp{nielsen2019kullback}),
which shows that this loss does not always favor overestimating a scale
parameter, despite what the empirical results in Figures
\ref{fig:heatmaps-normal} and \ref{fig:heatmaps-covid} might seem to suggest at
face value.

The approach we take in this subsection is to characterize the asymmetries
inherent to logarithmic loss within minimal single-parameter exponential
families. Let $\{p_\eta:\eta>0\}$ be a minimal exponential family of densities,
where
\[
p_\eta(x) = h(x) e^{\eta T(x)-A(\eta)},
\]
and the sufficient statistic $T$ is not almost everywhere constant. Below we
establish conditions for the logarithmic loss to favor forecasting
$p_{\theta\eta}$ over $p_{\eta/\theta}$, the opposite, or neither, for any
$\theta>1$, when the target distribution is $p_\eta$. These conditions are
described in terms of the second derivative of the log-partition function
$A(\eta)$; note the second derivative always exists and is nonnegative
\citep{keener2010theoretical}. In particular, logarithmic loss will favor
underestimating the natural parameter if the acceleration $A''(\eta)$ dominates
a rate $1/\eta^3$, in a particular sense to be made precise. On the other
hand, logarithmic loss will favor overestimating the natural parameter for
exponential families where the acceleration $A''(\eta)$ is dominated by the rate
$1/\eta^3$. When the natural parameter space is the entire real line (instead of
the positive reals), we derive similar conditions defined in terms of
$A''(\eta)$ versus $A''(-\eta)$. Our results are summarized in the following.

\medskip
\begin{lemma}
\label{lem:acceleration}
Let $\{p_\eta:\eta\in\Omega\}$ be a minimal single-parameter exponential family,
with the log-partition function $A(\eta)$. Denote by $\ell$ the logarithmic
loss. If $\Omega=(0,\infty)$, then the following holds for all $\eta\in\Omega$
and all $\theta>1$.
\begin{enumerate}
\item If $u^3A''(u)$ is increasing in $u$, then $\ell(p_{\theta\eta}, p_\eta) >
  \ell(p_{\eta/\theta}, p_\eta)$.
\item If $u^3A''(u)$ is decreasing in $u$, then $\ell(p_{\theta\eta}, p_\eta) <
  \ell(p_{\eta/\theta}, p_\eta)$.
\item If $u^3A''(u)$ is constant in $u$, then $\ell(p_{\theta\eta}, p_\eta) =
  \ell(p_{\eta/\theta}, p_\eta)$.
\end{enumerate}
If $\Omega=\R$, then the following holds for all $\eta\in\Omega$ and all
$\theta>0$.
\begin{enumerate}
\item If $A''(u)-A''(-u)$ is increasing in $u$, then $\ell(p_{\eta+\theta},
  p_\eta) > \ell(p_{\eta-\theta}, p_\eta)$.
\item If $A''(u)-A''(-u)$ is decreasing in $u$, then $\ell(p_{\eta+\theta},
  p_\eta) < \ell(p_{\eta-\theta}, p_\eta)$.
\item If $A''(u)-A''(-u)$ is constant in $u$, then $\ell(p_{\eta+\theta}, p_\eta)
  = \ell(p_{\eta-\theta}, p_\eta)$.
\end{enumerate}
\end{lemma}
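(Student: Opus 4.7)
My plan is to start from the Taylor representation of the KL divergence inside an exponential family, $\KL(p_\eta\|p_\xi)=\int_\eta^\xi(\xi-u)A''(u)\,du$ (with the convention that the integral runs backwards when $\xi<\eta$, so the result is always nonnegative), together with $d(F,G)=\KL(G\|F)$ for logarithmic loss. For Part~1, this writes $\Phi(\theta):=d(p_{\theta\eta},p_\eta)-d(p_{\eta/\theta},p_\eta)$ as $\int_\eta^{\theta\eta}(\theta\eta-u)A''(u)\,du-\int_{\eta/\theta}^\eta(u-\eta/\theta)A''(u)\,du$. The two intervals $[\eta,\theta\eta]$ and $[\eta/\theta,\eta]$ are reciprocal about $\eta$, so I would substitute $u=\eta t$ in the first integral and $u=\eta/s$ in the second to move both onto $[1,\theta]$, producing $\Phi(\theta)=\eta^2\int_1^\theta(\theta-t)\bigl[A''(\eta t)-A''(\eta/t)/(\theta t^3)\bigr]\,dt$.

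The key rearrangement is then to set $f(u)=u^3 A''(u)$ so that $A''(u)=f(u)/u^3$, which after simplification gives $\Phi(\theta)=\eta^{-1}\int_1^\theta(\theta-t)\bigl[f(\eta t)/t^3-f(\eta/t)/\theta\bigr]\,dt$. A direct evaluation shows that both $\int_1^\theta(\theta-t)t^{-3}\,dt$ and $\int_1^\theta(\theta-t)/\theta\,dt$ equal $(\theta-1)^2/(2\theta)$, so $\int_1^\theta(\theta-t)(1/t^3-1/\theta)\,dt=0$; this is consistent with case~3 and lets us subtract the constant $f(\eta)$ inside the bracket without changing the integral, yielding $\Phi(\theta)=\eta^{-1}\int_1^\theta(\theta-t)\bigl\{[f(\eta t)-f(\eta)]/t^3-[f(\eta/t)-f(\eta)]/\theta\bigr\}\,dt$. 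For $t\in[1,\theta]$ we have $\eta/t\leq\eta\leq\eta t$, so under case~1 ($f$ strictly increasing) both differences inside the braces have the sign that makes each term nonnegative, with strict positivity on $(1,\theta)$ forcing $\Phi(\theta)>0$. Cases~2 and 3 follow by flipping the inequality and by the identity just noted, respectively.

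For Part~2, the same Taylor device with $u=\eta\pm r$ collapses the difference to $\tilde\Phi(\theta)=\int_0^\theta(\theta-r)\bigl[A''(\eta+r)-A''(\eta-r)\bigr]\,dr$. The bracket vanishes at $r=0$, so I would write it as $\int_0^r[A'''(\eta+s)+A'''(\eta-s)]\,ds$ and apply Fubini on the triangle $\{0\leq s\leq r\leq\theta\}$ to obtain $\tilde\Phi(\theta)=\tfrac12\int_0^\theta(\theta-r)^2\bigl[A'''(\eta+r)+A'''(\eta-r)\bigr]\,dr$. Since $A'''(\eta+r)+A'''(\eta-r)$ is exactly the $r$-derivative of $A''(\eta+r)-A''(\eta-r)$, the three conclusions follow directly from the sign of this derivative, which is the content of the monotonicity hypothesis on $u\mapsto A''(u)-A''(-u)$. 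The main obstacle is the Part~1 reformulation: the mirror substitution that brings both integrals onto $[1,\theta]$ and the identity $\int_1^\theta(\theta-t)(1/t^3-1/\theta)\,dt=0$ must both be in place before $u^3 A''(u)$ becomes visible, after which subtracting $f(\eta)$ converts what looks like a signed integrand into a manifestly nonnegative one.
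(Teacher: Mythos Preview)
Your Part~1 argument is correct and is essentially the paper's proof, reorganised: the paper reduces to $\eta=1$, writes each side as a double integral $\int\!\!\int A''(u)\,du\,dv$, and bounds above and below by pulling out $u^3A''(u)$ via monotonicity, using that $\int_1^{\theta}\!\int_1^v u^{-3}\,du\,dv=\int_{1/\theta}^1\!\int_v^1 u^{-3}\,du\,dv=(\theta-1)^2/(2\theta)$. Your single-integral version, together with the subtraction of $f(\eta)$, is a neat repackaging of the same identity that avoids the separate upper/lower bounds and works directly for general $\eta$.

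The gap is in Part~2. Your expression $\tilde\Phi(\theta)=\tfrac12\int_0^\theta(\theta-r)^2\bigl[A'''(\eta+r)+A'''(\eta-r)\bigr]\,dr$ is fine, but the last sentence conflates two different derivatives. The hypothesis controls the sign of
\[
\frac{d}{du}\bigl[A''(u)-A''(-u)\bigr]=A'''(u)+A'''(-u),
\]
whereas what appears in your integrand is $A'''(\eta+r)+A'''(\eta-r)$, the $r$-derivative of $A''(\eta+r)-A''(\eta-r)$. These coincide only when $\eta=0$; for general $\eta$ the stated monotonicity of $u\mapsto A''(u)-A''(-u)$ does not give you the sign of $A'''(\eta+r)+A'''(\eta-r)$. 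Concretely, with $A''(u)=(u+1)^2\ge0$ one has $A''(u)-A''(-u)=4u$ strictly increasing, yet $A'''(\eta+r)+A'''(\eta-r)=4(\eta+1)$ is negative for $\eta<-1$, and then your integral is negative. So the final inference does not go through as written. (At $\eta=0$ you do not even need the Fubini step: from $\tilde\Phi(\theta)=\int_0^\theta(\theta-r)\,g(r)\,dr$ with $g(r)=A''(r)-A''(-r)$, the result is immediate since $g(0)=0$ and $g$ is increasing.) The paper itself only says the $\Omega=\R$ case is ``proven in a similar way'' and omits the details, so you cannot compare against an explicit argument there, but your claimed bridge from the hypothesis to the sign of the integrand for arbitrary $\eta$ needs repair.
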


\begin{proof}
Suppose $\Omega=(0,\infty)$. Using scale invariance, we may assume without loss
of generality that $\eta=1$. Denoting $p=p_1$, note that the difference in
divergence can be written as
\[
\ell(p_{\theta}, p)-\ell(p_{1/\theta}, p) = d(p_{\theta}, p)-d(p_{1/\theta}, p)
= \E \log\frac {p_{1/\theta}(Y)}{p_{\theta}(Y)} = A(\theta) - A(1/\theta) -
(\theta-1/\theta) \E T(Y),
\]
where recall $Y$ is a random variable with density $p$. A standard exponential
family identity states that $\E T(Y)=A'(1)$, hence we may write
\[
\ell(p_{\theta}, p)-\ell(p_{1/\theta}, p) = d_A(\theta, 1) - d_A(1/\theta, 1),
\]
where $d_A(y,x)=A(y)-A(x)-A'(x)(y-x)$ is the Bregman divergence associated with
the log-partition function $A$. Assume $u^3 A''(u)$ is increasing. Applying the
fundamental theorem of calculus twice, we bound one of the Bregman divergences
from below,
\[
d_A(\theta, 1) = \int_1^{\theta} \int_1^v A''(u) du dv =
\int_1^{\theta} \int_1^v \frac{u^3 A''(u)}{u^3} du dv >
A''(1) \int_1^{\theta} \int_1^v \frac{du dv}{u^3}.
\]
Applying the same argument again, we bound the other Bregman divergence from
above,
\[
d_A(1/\theta, 1) = \int_{1/\theta}^1 \int_{v}^1 A''(u) du dv =
\int_{1/\theta}^1 \int_{v}^1 \frac{u^3 A''(u)}{u^3} du dv <
A''(1) \int_{1/\theta}^1 \int_{v}^1 \frac{du dv}{u^3}.
\]
Now, the statement $\ell(p_{\theta}, p) > \ell(p_{1/\theta}, p)$ will have been
established as soon as we notice that the two integrals identify,
\[
\int_1^{\theta} \int_1^v \frac{du dv}{u^3} =
\frac{(\theta-1)^2}{2\theta} =
\int_{1/\theta}^1 \int_{v}^1 \frac{du dv}{u^3}.
\]
The statements for $u^3A''(u)$ decreasing and constant, as well as the
statements for the case that $\Omega=\R$, are proven in a similar way, and we
omit the details.
\end{proof}

We now turn to proving Theorem \ref{thm:exponential-family}. Before we do that,
let us define the exponential families covered by the theorem.

\begin{itemize}
\item The generalized gamma scale family has density
  $g_{\sigma}(x)=\Gamma(k/\gamma)^{-1}\sigma^{-k}\gamma
  x^{k-1}e^{-(x/\sigma)^\gamma}$, for $x,\sigma,\gamma,k>0$, and
  where $\Gamma$ is the gamma function. Here the natural parameter is
  $\eta=\sigma^{-\gamma}\in(0,\infty)$ and the log-partition function is
  $A(\eta)=-(k/\gamma)\log\eta$. Special cases are the gamma scale family
  ($\gamma=1$), exponential scale family ($\gamma=1$ and $k=1$) and the Weibull
  scale family ($k=\gamma$). If the generalized gamma scale family is extended
  by symmetry to $x\in \R$, we obtain the Laplace scale family ($\gamma=1$ and
  $k=1$) and the normal scale family ($\gamma=2$ and $k=1$).

\item The log-normal log-scale family has density
  $g_\sigma(x)=(x\sigma\sqrt{2\pi})^{-1} e^{-(\log{x}-\mu)^2/(2\sigma^2)}$,
  where $x,\sigma>0$ and $\mu\in\R$. The natural parameter is
  $\eta=\sigma^{-2}\in(0,\infty)$ and the log-partition function is
  $A(\eta)=-(1/2)\log\eta$.

\item The inverse gamma scale family has density
  $g_\sigma(x)=\Gamma(k)^{-1}\sigma^kx^{-k-1}e^{-\sigma/x}$, for
  $x,\sigma,k>0$. Here the natural parameter is $\eta=\sigma\in(0,\infty)$ and
  the log-partition function is $A(\eta)=-k\log\eta$.

\item The generalized gamma shape family has density
  $g_k(x)=\Gamma(k/\gamma)^{-1}\sigma^{-k}\gamma x^{k-1}e^{-(x/\sigma)^\gamma}$,
  for $x,\sigma,\gamma,k>0$. The natural parameter is $\eta=k\in(0,\infty)$ and
  the log-partition function is $A(\eta)=(\log\sigma)\eta +
  \log\Gamma(\eta/\gamma)$. A special case is the gamma shape family
  ($\gamma=1$).

\item The Pareto shape family has density $g_k(x)=km^k/x^k$, for $x\geq m$ and
  $k,m>0$. Here the natural parameter is $\eta=k\in(0,\infty)$ and the
  log-partition function is $A(\eta)=-\log\eta-(\log m)\eta$.

\item The inverse Gaussian shape family has density $g_k(x)=\sqrt{k/2\pi x^3}
  e^{-k(x-\mu)^2/(2\mu^2x)}$, for $x,k,\mu>0$. The natural parameter is
  $\eta=k\in(0,\infty)$ and the log-partition function is
  $A(\eta)=-(1/2)\log\eta$.

\item The beta shape family has density
  $g_k(x)=x^{k-1}(1-x)^{\beta-1}\Gamma(k+\beta)/(\Gamma(k)\Gamma(\beta))$, for
  $x\in[0,1]$ and $k,\beta>0$. The natural parameter is $\eta=k\in(0,\infty)$
  and the log-partition function is
  $A(\eta)=-\log(\Gamma(\eta+\beta)/\Gamma(\eta))$.

\item The Poisson rate family has density
  $g_\lambda(k)=\lambda^ke^{-\lambda}/k!$, for $k\geq 0$ integer and
  $\lambda>0$. Here the natural parameter is $\eta=\log\lambda\in\R$ and the
  log-partition function is $A(\eta)=e^\eta$.
\end{itemize}

\begin{proof}[Proof of Theorem \ref{thm:exponential-family}]
For families with a log-partition function of the form
$A(\eta)=-a\log\eta+b\eta$, where $a>0$ and $b\in \R$, the acceleration is
$A''(\eta)=a/\eta^2$ which means that $u^3 A''(u)$ is increasing in $u$. Hence,
by Lemma \ref{lem:acceleration}, we have $\ell(p_{\theta\eta},p_\eta) >
\ell(p_{\eta/\theta},p_\eta)$ for all $\eta>0$ and all $\theta>1$. Consequently,
for the generalized gamma scale family, as the scale parameter $\sigma$ is
inversely proportional to the natural parameter $\eta=1/\sigma^\gamma$, we have,
writing \smash{$g_{\theta^{1/\gamma}}$} for the density of $G_\theta$,
\[
\ell(G_{1/\theta}, G) =
\ell\big( g_{\theta^{-1/\gamma}}, g \big) =
\ell(p_\theta,p) > \ell(p_{1/\theta},p) =
\ell\big( g_{\theta^{1/\gamma}}, g \big) =
\ell(G_\theta, G),
\]
for all $\theta>1$. As special cases, we obtain the required
results for gamma, exponential and Weibull scale families. By symmetrization,
the same applies to the Laplace and normal scale families as well, and analogous
arguments hold for log-normal log-scale family, inverse gamma scale family, and
the Pareto and inverse Gaussian shape families.

For the generalized gamma shape family, the acceleration is
$A''(\eta)=\gamma^{-2}\psi'(\eta/\gamma)$, where $\psi$ is the digamma
function. It suffices to show that $f(x)= x^3 \psi'(x)$ is an increasing
function of $x$. From the series representation \citep{arfken2011mathematical}
\[
\psi'(x)=\sum_{n=0}^\infty \frac{1}{(x+n)^2}  \quad \text{and} \quad
\psi''(x)=-\sum_{n=0}^\infty \frac{2}{(x+n)^3},
\]
valid for all $x>0$, we differentiate:
\[
f'(x)=3x^2\psi'(x)+x^3\psi''(x) = x^2\sum_{n=0}^\infty \frac{x+3n}{(x+n)^3} > 0.
\]
Therefore, $f$ is increasing, consequently $u^3A''(u)$ is increasing in $u$, and
by Lemma \ref{lem:acceleration} we obtain the desired result. As a special case
we obtain the desired result for the gamma shape family. For the beta shape
family, the acceleration is $A''(\eta)=\psi'(\eta) - \psi'(\eta+\beta)$, and it
suffices to show that $f(x)=x^3(\psi'(x)-\psi'(x+\beta))$ is an increasing
function of $x$. We use the series representation to differentiate:
\[
f'(x)=\big(3x^2\psi'(x)+x^3\psi''(x)\big)-\left(3x^2\psi'(x+\beta) +
x^3\psi''(x+\beta)\right) = x^2\sum_{n=0}^\infty \bigg(\frac{x+3n}{(x+n)^3} -
\frac{x+3(\beta+n)}{(x+\beta+n)^3}\bigg).
\]
As $a\mapsto (x+3a)/(x+a)^3$ is decreasing, the derivative $f'$ is
positive, thus $f$ is increasing. Consequently, $u^3A''(u)$ is increasing in
$u$, and by Lemma \ref{lem:acceleration} we obtain the desired result. Finally,
for the Poisson rate family, the acceleration is $A''(\eta)=e^\eta$, with
$\Omega=\R$. Since $A''$ is increasing, by Lemma \ref{lem:acceleration} we
obtain the desired result, completing the proof.
\end{proof}

\begin{remark}
Using the same approach, we may obtain similar results for further exponential
families, and we omit the details. Examples of such families to which this
method is applicable:  normal location family, log-normal log-location family,
inverse Gaussian mean family, binomial family with success probability as
parameter.
\end{remark}

\section{Empirical results}
\label{sec:empirical-results}

We now discuss in greater detail the experiments presented in the introduction, and
report further results. We begin by highlighting the implications of the
asymmetries underlying proper loss functions on real data, over three domains. A
comprehensive synthetic experiment across five proper loss functions finishes
this section, which corroborates many of our previous findings and motivates
directions for future work. All of our results are reproducible from the code
available at \url{https://github.com/jv-rv/loss-asymmetries/}.

\subsection{Covid-19 mortality}

The Covid-19 Forecast Hub \citep{cramer2022us} has been used by the U.S.\
Centers for Disease Control and Prevention (CDC) to communicate information
leading to policy decisions, including as to the allocation of ventilators,
vaccines, and medical staff across geographic locations over the Covid-19
pandemic. We restrict our attention to death forecasts, as in
\citet{cramer2022eval}, and we analyze weekly predictions from dozens of
forecasters of the number of Covid-19 deaths one through four weeks ahead, for
each U.S.\ state, over the period from April 2020 to March 2023. All forecasts
in the Hub are stored in quantile format, and so we converted each forecast from
a discrete set of quantiles to a density or CDF before compute log or CRP loss,
respectively. Details are given in Appendix \ref{app:covid-converting}.

To compute the heatmap in Figure \ref{fig:heatmaps-covid}, we applied
transformations to the forecasts and the target data, as follows. First, we
estimated and applied nonparametric transformations to standardize the target
distribution, giving it approximately mean zero and variance one at every time
point and for every location; details are given in Appendix
\ref{app:covid-standardizing}. Second, we centered and scaled each input
forecast distribution in the Covid-19 Hub, giving it mean $\mu$ and variance
$\sigma$, and then we evaluated the loss (logarithmic or CRP) between the
transformed forecast distribution and standardized target value. We averaged
these loss values over all dates, all locations, and all forecasters; this was
done over a fine grid of $\mu,\sigma$ values in order to create the heatmap
(each pair $\mu,\sigma$ corresponds to a particular pixel value). Lastly, the
color scale for the losses in Figures \ref{fig:heatmaps-normal} and
\ref{fig:heatmaps-covid} (as well as Figure \ref{fig:asymmetric-laplace}) was
set nonlinearly in order to better emphasize the sublevel sets.

Figure \ref{tab:covid-ranking} reports the standardized ranking of ten
forecasters who participated in the Covid-19 Forecast Hub and received the top
scores in Figure 2 of \citet{cramer2022eval}. The standardized ranking is
computed in the same manner as that paper, as follows. For each state, date and
weeks ahead, we ranked the forecasters according to loss, then divided by the
number of forecasters which submitted a forecast for the state-date-weeks-ahead
combination. This is the standardized ranking value, between zero and one, which
we then averaged for each forecaster across states, dates, and weeks ahead. Each
column of Figure \ref{tab:covid-ranking} displays this for a different choice of
loss---log loss (left column) and CRP loss (right). To further annotate, we
identified the four forecasters with the highest standardized ranking according
to forecast variance (instead of loss) and the four lowest. For the four with
the highest standardized variance, logarithmic loss ranked each one at least as
high as CRP loss, with an average margin of 2 places. For the four with the
lowest standardized variance, CRP loss ranked each one at least as high as log
loss, with an average margin of 1.75 places.

\subsection{Retail sales}

The Makridakis Competitions are well-known in the applied forecasting community,
with the initial edition beginning over 40 years ago
\citep{makridakis1982accuracy}. The fifth edition introduced an uncertainty
track called the M5 Uncertainty Competition \citep{makridakis2022uncertainty},
in which teams were tasked with forecasting nine quantiles of future retail
sales for a myriad of products, aggregation levels, and horizons. Out of almost
900 competing teams, the 50 with best average performance, according to a
weighted and scaled version of the quantile loss, had their forecasts
published. Demand is notoriously intermittent, especially at the least
aggregated level, which posed a steep challenge to forecasters. Many of the
best-performing teams used gradient boosting \citep{friedman2001greedy} to
predict quantiles. As we did with the forecasts in the Covid-19 Forecast Hub,
we converted these quantile forecasts into a density or CDF before computing log
or CRP loss, respectively, following the same strategy as that outlined in
Appendix \ref{app:covid-converting}.

Each forecaster was then ranked among all forecasts issued for each prediction
task, and the rankings were standardized to be between zero and one. Averaging
the standardized ranks coming from logarithmic and CRP losses, as we did in the
Covid-19 experiment, yielded Figure \ref{fig:m5-ranking} in the introduction. As
we can see, log loss clearly prefers the forecasters that have higher
standardized forecast variance (from left to right, there is clear downward
movement in the rankings of highest-variance forecasters), while CRP clearly
prefers the opposite (from left to right, clear upward movement in the rankings
of lowest-variance forecasters).

\subsection{Temperature extremes}

The Coupled Model Intercomparison Project (CMIP) is a significant project
collecting the results of climate simulation models, introduced nearly 30 years
ago by \citet{meehl1997intercomparison}. Here we consider the maximum
temperatures over the boreal summer simulated by the 30 models available from
its sixth edition CMIP6 \citep{eyring2016overview}. We source the target data
from Hadex3 \citep{dunn2020development}, a data set of monthly extreme
temperatures (and other climate data) on a spatial grid, based on observations
recorded from thousands of meteorological stations across the world.

Using historical runs of each model from 1950 to 2014, we first calculated the
predicted monthly maximum temperature from the available daily data. We then
translated each model's simulated predictions to Hadex3's spatial grid via
bilinear interpolation routines from the Climate Data Operators collection
\citep{schulzweida2023cdo}. After this step, each model has 195 simulated
maximum temperatures at 3055 spatial locations (grid points). Adopting a
similar approach to previous work in the literature
\citep{thorarinsdottir2020evaluation}, we applied kernel density
estimation---with a Gaussian kernel and Scott's rule for automatic bandwidth
selection \citep{scott1992multivariate}---in order to form a forecast
distribution at each location. The same was done with the target data, in order
to form a target distribution at each location. (We ensure densities at each
location have the same support fitting exponential tails to the extreme
samples.)

\begin{figure}[htb]
\centering
\includegraphics[width=\textwidth]{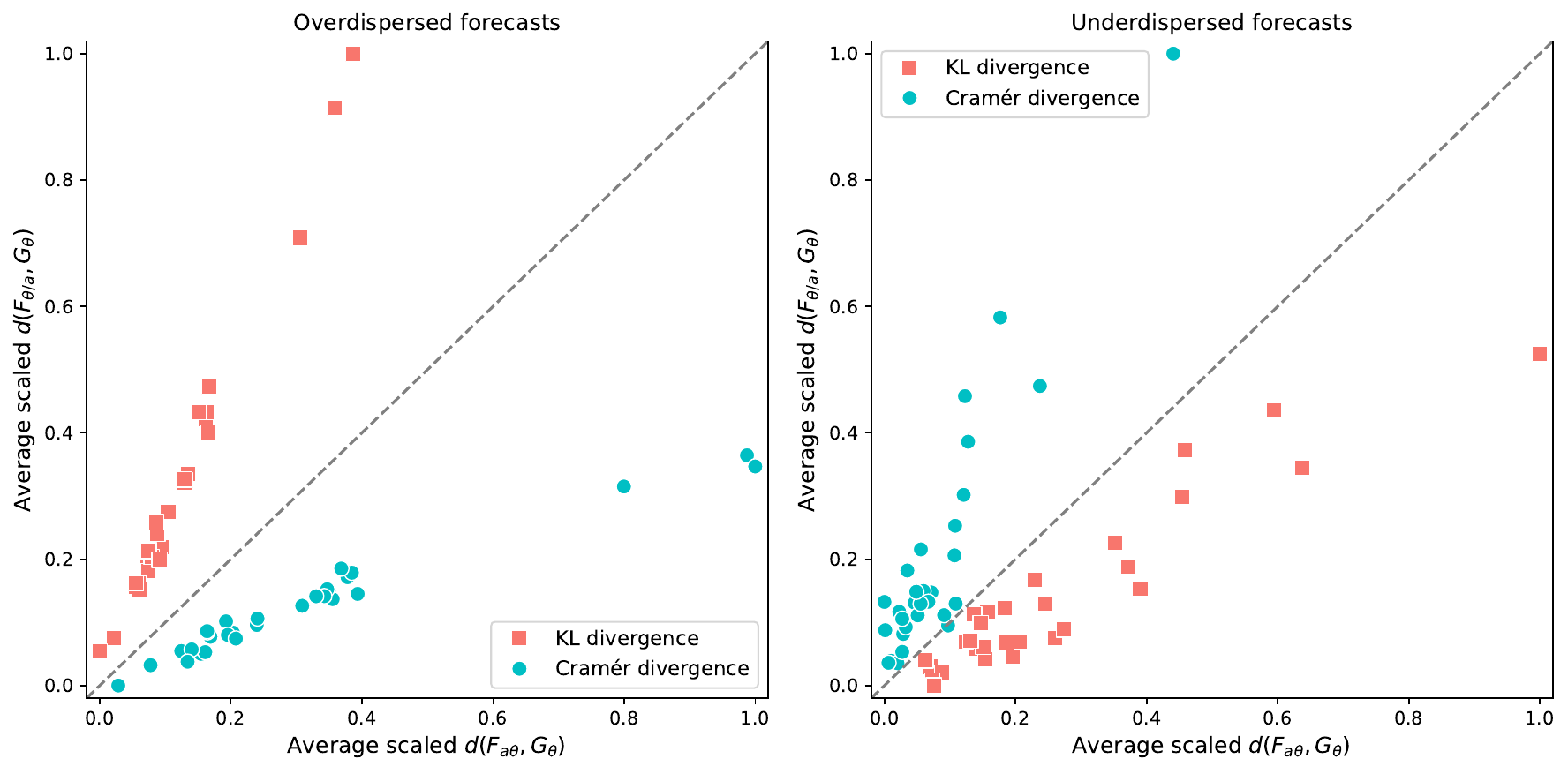}
\caption{\small Average divergence for 30 models in CMIP6, in cases where they
  are initially overdispersed (left panel) or underdispersed (right panel). For
  each forecast distribution $F_{a\theta}$, and target distribution $G_\theta$,
  we compare divergences $d(F_{a\theta}, G_\theta)$ and $d(F_{\theta/a},
  G_\theta)$ to see whether the given divergence prefers overdispersion to
  underdispersion. The average Cramér divergence is improved for all 30 models
  when initially overdispersed forecasts are made underdispersed, and the
  average KL divergence is improved for all 30 models when underdispersed
  forecasts are made overdispersed.}
\label{fig:cmip6-divergences}
\end{figure}

After computing these forecast and target densities, we then performed the
following evaluation scheme for each model. At each spatial location, we first
shifted the forecast density so that it matches the mean of the target
density---this was done to focus on differences in scale. We then estimated the
standard deviations of the forecast and target densities, and denote these
distributions by $F_{a\theta}$ and $G_\theta$, respectively. Note that if $a>1$
then the forecast distribution is overdispersed compared to the target at the
given spatial location, and if $a<1$ then it is underdispersed. Lastly,
subsetting to spatial locations with $a>1$, we computed the average divergence
$d(F_{a\theta}, G_\theta)$, as well as the average divergence $d(F_{\theta/a},
G_\theta)$. The former represents the divergence of the (original) overdispersed
forecast, while the latter represents the divergence of the underdispersed
counterpart (where the deviation in scale has been inverted).  This is displayed
on the left panel of Figure \ref{fig:cmip6-divergences}, where one point
represents one model and one divergence---either KL (associated with log loss),
or Cram{\'e}r divergence (associated with CRP loss). The right panel of Figure
\ref{fig:cmip6-divergences} displays the result of a complementary calculation:
subsetting to spatial locations with $a<1$, we compare the average divergence
$d(F_{a\theta}, G_\theta)$ and the average divergence $d(F_{\theta/a},
G_\theta)$. We can clearly see that all 30 models from CMIP6 have their
overdispersed forecasts improve in Cramér divergence when deflated and all their
underdispersed forecasts improve in KL divergence when inflated.

\subsection{Synthetic data}

The asymmetric Laplace distribution has density
\[
f(x) = \frac{p(1-p)}{\sigma} e^{-\frac{x-\mu}{\sigma} (p - \mathbb{I}
\{x \leq \mu\})},
\]
for $x \in \R$, a location parameter $\mu\in\R$, a scale parameter $\sigma>0$,
and skew parameter $p\in(0,1)$ \citep{koenker1999goodness, yu2005asymmetric}.
This reduces to the standard Laplace distribution for $p=1/2$, it is skewed to
the right for $p<1/2$, and skewed to the left for $p>1/2$. Often used in
Bayesian quantile regression \citep{yu2001bayesian}, it has been more recently
employed in probabilistic forecasting, for wind power forecasts
\citep{wang2022ensemble, wang2022deep}. The purpose of the experiment in this
subsection is twofold: it depicts behaviors one would expect given our
theoretical results, and also displays interesting phenomena outside of the
scope of our theory, pointing towards possible future work.

We computed the CRP, logarithmic, quadratic, spherical, and Dawid--Sebastiani
loss between a zero-mean unit-variance target asymmetric Laplace distribution,
and forecast distributions in the same family but with varying location and
scale. Figure \ref{fig:asymmetric-laplace} displays the results, with each row
showing a different skew parameter, and each column a different loss. The top
row corresponds to $p=0.2$ (right-skewed), the middle row to $p=0.5$
(symmetric), and the bottom row to $p=0.8$ (left-skewed). As expected, losses
penalize symmetrically on the location $\mu$ when the scale is correctly
specified, except for logarithmic loss---this loss is symmetric in $\mu$ when
the distribution itself is symmetric (middle row), but it prefers upshifted or
downshifted forecasts when the distribution is right- or left-skewed (top or
bottom rows), respectively.

\begin{figure}[htb]
\centering
\includegraphics[width=\textwidth]{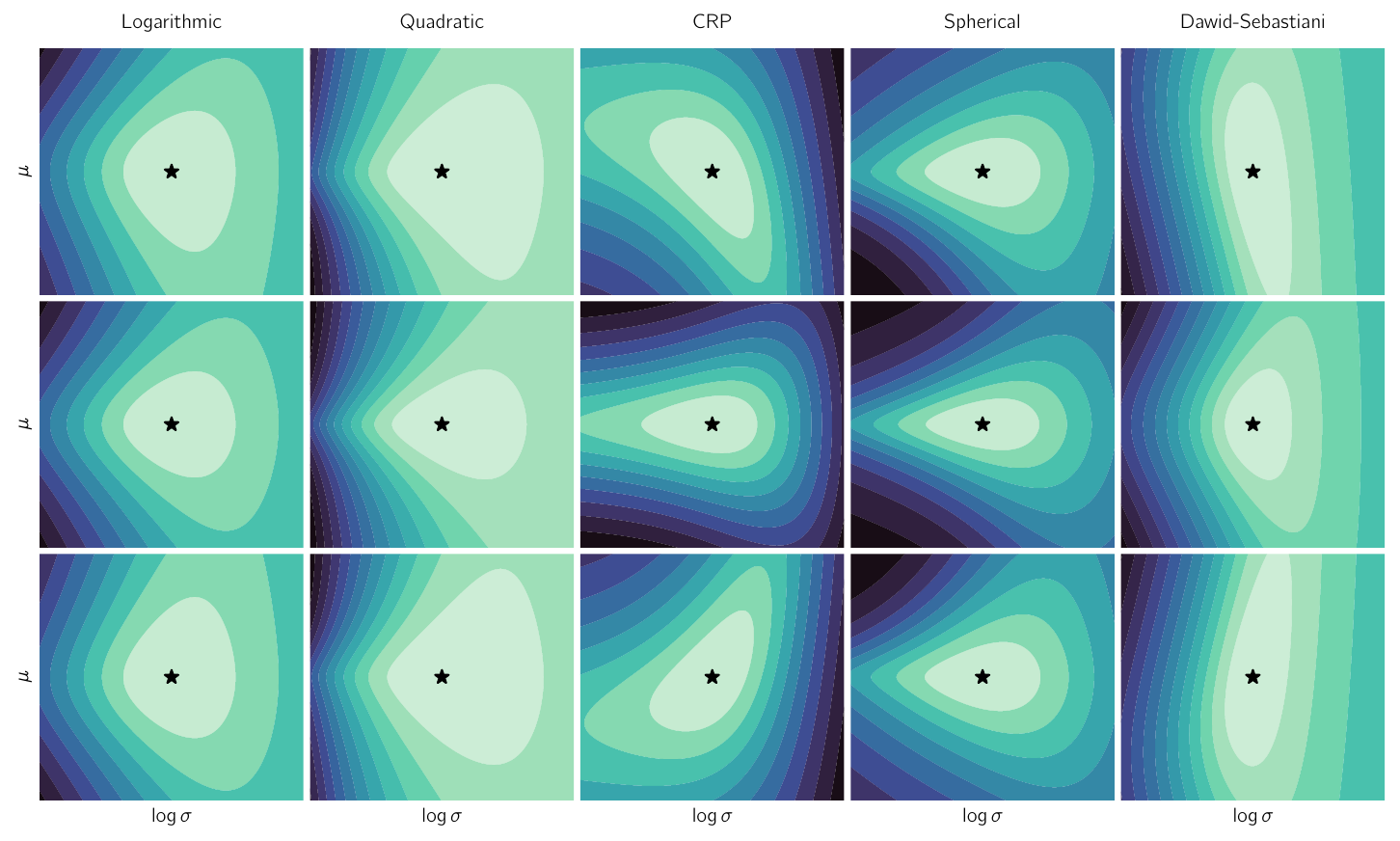}
\caption{\small Expected losses for a zero-mean unit-variance asymmetric Laplace
  target, and forecasts of the same family with varying location $\mu$ and
  scale $\sigma$. Distributions are right skewed (top panel; $p=0.2$), symmetric
  (middle; $p=1/2$), or left skewed (bottom; $p=0.8$). A lighter color
  represents a lower loss, with minimum achieved at the star.}
\label{fig:asymmetric-laplace}
\end{figure}

When the location is correctly specified, the penalties on the scale $\sigma$
all agree with what is suggested by our theory. CRP prefers underdispersed
forecasts, log loss prefers underdispersed forecasts in the case without
skewness (middle row, in which case we are in an exponential family), quadratic
and DS losses prefer overdispersed forecasts, and spherical loss is symmetric in
$\log\sigma$. Interestingly, the asymmetric penalty of log loss in the
symmetric Laplace case extends to the right- and left-skewed cases (top and
bottom rows), despite the fact that the asymmetric Laplace family is not an
exponential family. Moreover, considering the behavior of losses in $\mu,\sigma$
jointly leads to some interesting observations. For example, spherical loss,
despite being symmetric on both axes, can become asymmetric in the scale
parameter, for misspecified location. As another example, CRP seems particularly
affected by skew in the underlying distribution, with its sublevel sets
exhibiting a strong tilt in the asymmetric Laplace cases.

\section{Hedging proper losses under distribution shift
}\label{sec:hedging}

It is commonly held that proper loss functions encourage honest forecasting, in
the sense that if a forecaster believes the target distribution is $G$, then
they can minimize their expected loss by forecasting $G$. This statement can be
understood in the framework of subjective probability: if $G$ is the subjective
probability of the forecaster, then $G$ is the conditional distribution of the
target given the information available to the forecaster. Propriety guarantees
that no other forecast $F$ will incur lesser (conditional expected) loss,
$\ell(F,G) \geq \ell(G,G)$. This subjective probability formulation is
articulated explicitly in earlier seminal papers (as in
\citealp{brier1950verification, winkler1968good}), but not as often in modern
ones (as in \citealp{gneiting2007strictly, parry2012proper}). In practice,
forecasts are rarely subjective probabilities. This can be attributed to a
multitude of issues, such as misspecification of the data generating process. If
$G$ is not the forecaster's subjective probability, then no formal guarantee
appears to follow from the definition of propriety that would merit the
assertion that the forecaster should forecast their opinion $G$.

Consider a forecaster with firm opinion that the target distribution is $G$. The
forecaster may ask---what if I am wrong? If error is conceivable, the forecaster
might then ask---is it better to err in one form over another? If the answer to
the latter question is affirmative, the forecaster might be incentivized to
deviate from their honest opinion to protect against the possibility of
erring in a particularly unfavorable way. Such behavior has been termed
\emph{hedging}. A key motivation for using proper losses has historically
been the supposition that they provide no opportunity for hedging
\citep{brier1950verification, murphy1967note, winkler1968good}.

We are motivated to scrutinize this supposition in situations where forecasting
error is possible (or even unavoidable) and losses exhibit asymmetric penalties
on forecast errors. In particular, we study hedging in a setting of distribution
shift, in which the parameter in a scale or exponential family shifts in testing
relative to the training population. In this case, if the shift is log-symmetric
around the training parameter value, then the asymmetry in loss penalties can be
used to describe the direction toward which the forecaster ought to deviate from
their honest opinion.

\subsection{Problem setting}

We suppose that the forecaster has ideal knowledge of the training population,
say, by having access to infinitely many independent and identically distributed
observations, so they are able to conclude with certainty that the distribution
of the target in training is \smash{$G_{\sigma_{\text{train}}}$}. In testing,
however, suppose that there is distribution shift in the scale parameter, where
the law of target variable $Y$ in testing is defined as follows: we first draw a
random scale parameter \smash{$\sigma_{\text{test}}>0$}, then draw \smash{$Y
  \sim G_{\sigma_{\text{test}}}$}. To express our ignorance of the nature of the
distribution shift relative to the training population, we assume that
\smash{$\sigma_{\text{test}}$} is log-symmetric around
\smash{$\sigma_{\text{train}}$}, that is, \smash{$\log\sigma_{\text{test}}$}
has a symmetric distribution around \smash{$\log\sigma_{\text{train}}$}.

A forecaster not aware of distribution shift in their data might naively
forecast \smash{$G_{\sigma_{\text{train}}}$}. The question then arises whether
the forecaster can do better if they suspect distribution shift is present (as
would be common in many, if not most, real applications of forecasting).
If a loss function $\ell$ is proper, then the minimum expected loss over all
possible realizations of the test scale \smash{$\sigma_{\text{test}}$} is of
course obtained by \smash{$\E G_{\sigma_{\text{test}}}$}, the unconditional
law of $Y$. That is,
\[
\E \ell(F,G_{\sigma_{\text{test}}}) \geq \E \ell(\E G_{\sigma_{\text{test}}},
G_{\sigma_{\text{test}}}), \quad \text{for any $F$}.
\]
However, this requires the forecaster to have perfect knowledge of the
distribution of \smash{$\sigma_{\text{test}}$}, whereas our setting assumes no
such knowledge. The forecaster, equipped only with knowledge of log-symmetry of
the test scale distribution, might seek a better forecast than
\smash{$G_{\sigma_{\text{train}}}$} within the scale family
$\{G_\sigma:\sigma>0\}$, and the question we consider here is whether there is a
fixed forecast \smash{$G_{\sigma^*}$} with lesser expected loss. We show that
the answer is generally yes.

Before going on to state and prove general results about hedging for the various
loss functions, we present an informal discussion that may serve as an intuitive
guide relating hedging to asymmetric penalties. The setting of distribution
shift we analyze here differs from the previous setting of asymmetric penalties
studied in Section \ref{sec:main-results}, in that the roles of forecast and
target distributions are, in a certain sense, reversed. In Section
\ref{sec:main-results}, recall, the target $G = G_1$ was fixed and we considered
multiple forecasts $G_a$ and $G_{1/a}$ dispersed log-symmetrically around the
target. Instead, we now think of a given forecast $G$ as fixed, and we consider
multiple targets dispersed log-symmetrically around the forecast. If
\smash{$\sigma_{\text{test}}$} is distributed uniformly on $\{a,1/a\}$ with
$a>1$ (assuming without loss of generality that
\smash{$\sigma_{\text{train}}=1$}), then
\[
2\E d(G, G_{\sigma_{\text{test}}}) = d(G, G_a) + d(G, G_{1/a}).
\]
If the divergence is scale invariant, such as the one induced by logarithmic
loss, then
\[
d(G,G_a)=d(G_{1/a},G) \quad \text{and} \quad d(G,G_{1/a})=d(G_a,G).
\]
Meaning, penalties are reversed, and based on Theorem
\ref{thm:exponential-family} the realization $G_a$ of larger target scale will
typically lead to larger divergence. If instead the divergence is symmetric,
such as the one induced by CRP or quadratic loss, then
\[
d(G,G_a)=d(G_a,G) \; \; \; \; \text{and} \; \; \; \; d(G,G_{1/a})=d(G_{1/a},G).
\]
Lemma \ref{lem:scale-asymmetry} tells us that $G_a$ will lead to larger
divergence if $d$ is rescalable with increasing scale function, as in CRP loss,
or will lead to smaller divergence if $d$ is rescalable with decreasing scale
function, as in quadratic loss. In summary, for symmetric divergences the two
settings of asymmetric penalties and of distribution shift are interchangeable,
whereas for scale invariant divergences they are the reverse of each other.

It may appear at this point that a forecaster would hedge in a direction that
draws toward the least favorable realization of the test scale parameter in
order to mitigate it, and this is indeed the case for logarithmic, CRP, and
quadratic losses in certain cases. However, in general, the precise arguments
needed for these (and other) losses are more subtle and we give the details
across the next two subsections.

\subsection{Scale families}

The next theorem characterizes the direction of hedging for losses that induce
symmetric rescalable divergence, and the direction is shown to depend on the
particular scale family. (We make the assumption that the training scale
parameter is set to be $\sigma_{\text{train}}=1$ here, which we again emphasize
comes at no loss of generality.)

\medskip
\begin{theorem}
\label{thm:hedge-scale}
Let $\{G_\sigma:\sigma>0\}$ be a scale family, $\ell$ a loss function, and $d$
the induced divergence, where $d$ is symmetric and rescalable with a scaling
function $h$. Let $\sigma_{\text{test}}>0$ be a random variable, which is not
almost surely constant, such that $\sigma_{\text{test}}$ and
$1/\sigma_{\text{test}}$ have the same distribution and $\E \ell(G_\sigma,
G_{\sigma_{\text{test}}})$ is finite for all $\sigma>0$. Fix $G=G_1$ and define
\[
f(\sigma)=d(G_{\sigma}, G),
\]
Assume that $f$ and $h$ are differentiable and conditions for exchanging
differentiation and taking expectation with respect to $\sigma_{\text{test}}$
apply (e.g., a sufficient condition is that $\sigma_{\text{test}}$ has
compactly supported density and $f'$ is continuous). Then the following holds.
\begin{enumerate}
\item If $(h-1)/f$ is increasing on $[1,\infty)$, then there exists $\sigma^*<1$
  such that $\E \ell(G_{\sigma^*}, G_{\sigma_{\text{test}}}) < \E \ell(G,
  G_{\sigma_{\text{test}}})$.

\item If $(h-1)/f$ is decreasing on $[1,\infty)$, then there exists $\sigma^*>1$
  such that $\E \ell(G_{\sigma^*}, G_{\sigma_{\text{test}}}) < \E \ell(G,
  G_{\sigma_{\text{test}}})$.
\end{enumerate}
\end{theorem}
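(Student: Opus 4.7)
The plan is to show that the naive forecast $G = G_1$ is not a local minimum of $\sigma \mapsto \E \ell(G_\sigma, G_{\sigma_{\text{test}}})$ whenever $(h-1)/f$ is strictly monotone on $[1,\infty)$. Since this loss differs from $\phi(\sigma) := \E d(G_\sigma, G_{\sigma_{\text{test}}})$ only by the $\sigma$-free quantity $\E \ell(G_{\sigma_{\text{test}}}, G_{\sigma_{\text{test}}})$, it suffices to compute $\phi'(1)$ and track its sign: $\phi'(1)>0$ yields some $\sigma^*<1$ close to $1$ with $\phi(\sigma^*)<\phi(1)$, while $\phi'(1)<0$ yields some $\sigma^*>1$.

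The first step exploits symmetry combined with rescalability to obtain two complementary representations of the divergence, namely
\[
d(G_\sigma, G_\tau) = h(\tau)\, f(\sigma/\tau) = h(\sigma)\, f(\tau/\sigma),
\]
the second following from $d(G_\sigma,G_\tau)=d(G_\tau,G_\sigma)$ and reapplying rescalability with the roles swapped. Equating the two at $\tau=1$ yields the functional identity $f(u)=h(u)\,f(1/u)$, which forces $h(1/u)=1/h(u)$. Differentiating this identity then gives an explicit expression of $f'(1/u)$ in terms of $f,f',h,h'$ evaluated at $u$, and this will be the crucial ingredient for the calculation.

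Next, direct differentiation of the first representation in $\sigma$ gives $\phi'(1)=\E[h(S)\,f'(1/S)/S]$ where $S=\sigma_{\text{test}}$; using the fact that $S$ and $1/S$ have the same distribution, together with $h(1/S)=1/h(S)$, produces the companion expression $\phi'(1)=\E[S\,f'(S)/h(S)]$. Averaging these two representations and substituting the derived formula for $f'(1/S)$, the $f'$-only cross terms collapse, and I expect to arrive at the clean identity
\[
\phi'(1) = \frac{1}{2}\,\E\!\left[\frac{S\,f(S)^2}{h(S)} \left(\frac{h-1}{f}\right)'(S)\right].
\]

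The sign analysis is then straightforward. The factor $Sf(S)^2/h(S)$ is positive, and from $h(1/s)=1/h(s)$ and $f(1/s)=f(s)/h(s)$ one checks that $(h-1)/f$ is \emph{odd} under the involution $s\leftrightarrow 1/s$, i.e., $[(h-1)/f](1/s) = -[(h-1)/f](s)$. Hence $[(h-1)/f]'$ has the same sign on $(0,1)$ as on $(1,\infty)$, so monotonicity of $(h-1)/f$ on $[1,\infty)$ forces a definite sign for the integrand throughout the support of $S$, giving a strict sign for $\phi'(1)$ (strictness coming from the assumption that $\sigma_{\text{test}}$ is not almost surely constant). The main obstacle is the algebraic consolidation in the third step---differentiating the functional identity and averaging the two expressions for $\phi'(1)$ to collapse everything into the derivative of $(h-1)/f$ requires careful bookkeeping---but once that identity is established, the remainder is a clean sign-tracking argument.
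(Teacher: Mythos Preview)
Your proposal is correct and follows essentially the same route as the paper: both arguments combine rescalability with the functional identity $f(u)=h(u)f(1/u)$ (obtained from symmetry) to reduce the derivative at $\sigma=1$ to the clean form $\tfrac{1}{2}\,\E\!\bigl[\tfrac{S f(S)^2}{h(S)}\bigl(\tfrac{h-1}{f}\bigr)'(S)\bigr]$, and then read off the sign. The only cosmetic difference is that the paper first packages the two symmetric contributions into $g_a(\sigma)=d(G_\sigma,G_a)+d(G_\sigma,G_{1/a})$ and then integrates, whereas you invoke the distributional symmetry $S\stackrel{d}{=}1/S$ directly to average two expressions for $\phi'(1)$; your explicit oddness argument for $(h-1)/f$ under $s\leftrightarrow 1/s$ plays the same role as the paper's implicit use of $g_a=g_{1/a}$ to extend positivity from $a>1$ to all $a$.
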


\begin{proof}
Define
\[
g_a(\sigma) = d(G_\sigma, G_a) + d(G_{\sigma}, G_{1/a}).
\]
We will first show that $g_a'(1)$ has the same sign as the derivative of
$(h-1)/f$ at $a$. Indeed, using rescalability (twice), we can rewrite
$g_a(\sigma)$ as
\[
g_a(\sigma) = h(a)d(G_{\sigma/a}, G) + \frac{1}{h(a)}d(G_{a\sigma}, G) = h(a)
f(\sigma/a) + \frac{1}{h(a)} f(a\sigma).
\]
Since $f$ is differentiable, so is $g_a$, and the derivative of $g_a$ at $1$ is
\[
g_a'(1) = \frac{h(a)}{a}f'(1/a) + \frac{a}{h(a)}f'(a).
\]
Now we relate $f'(1/a)$ to $f'(a)$ in order to plug into the equation for
$g_a'(1)$ above. By symmetry and rescalability, note that
$f(\sigma)=h(\sigma)f(1/\sigma)$, and differentiating this we get
\[
f'(\sigma) = h'(\sigma)f(1/\sigma) - \frac{h(\sigma)}{\sigma^2}f'(1/\sigma) =
\frac{h'(\sigma)}{h(\sigma)}f(\sigma) - \frac{h(\sigma)}{\sigma^2}
f'(1/\sigma).
\]
Rearranging to isolate $f'(1/\sigma)$, and plugging into the previous equation
for $g_a'(1)$, we arrive at
\[
g_a'(1) = \frac{a}{h(a)} (h'(a)f(a) - (h(a)-1)f'(a)) =
\frac{af(a)^2}{h(a)} \bigg(\frac{h-1}{f}\bigg)'(a).
\]
The condition that $(h-1)/f$ is increasing on $[1, \infty)$ is thus sufficient
to ensure a positive derivative $g_a'(1)>0$ for all $a>1$. Recalling
$\sigma_{\text{test}}$ is a random variable such that $\sigma_{\text{test}}$ and
$1/\sigma_{\text{test}}$ have a common distribution, denoted $H$, define
\[
g(\sigma) = 2\E d(G_{\sigma}, G_{\sigma_{\text{test}}}) = \E[d(G_\sigma,
G_{\sigma_{\text{test}}}) + d(G_{\sigma}, G_{1/\sigma_{\text{test}}})] =
\int g_a(\sigma) \, dH(a).
\]
If differentiation under the integral sign is permitted, then
\[
g'(\sigma) = \int g_a'(\sigma) \, dH(a).
\]
The condition that $(h-1)/f$ is increasing on $[1,\infty)$ is thus also
sufficient to ensure $g'(1)>0$, entailing the existence of $\sigma^*<1$ for
which
\[
\E \ell(G_{\sigma^*}, G_{\sigma_{\text{test}}}) - \E \ell(G,
G_{\sigma_{\text{test}}}) = \frac{1}{2}(g(\sigma^*)-g(1)) < 0.
\]
The case where $(h-1)/f$ is decreasing is proven similarly.
\end{proof}

\begin{example}[Exponential distribution]
When $G_\sigma$ is the exponential distribution with scale $\sigma$ (see Section
\ref{sec:main-exponential}), we can infer the following using Theorem
\ref{thm:hedge-scale}. Under CRP loss, hedging is carried out by inflating the
scale, making it flatter and less informative, while under quadratic loss it is
carried out by deflating the scale, making it sharper and overconfident. To see
this, we note a simple calculation yields for CRP loss that
\[
d(G_\sigma, G) = (1+\sigma)/2 - 2\sigma/(1 + \sigma).
\]
Recalling $h(\sigma)=\sigma$, it may be shown that $d(G_\sigma,
G)/(h(\sigma)-1)$ is increasing. Hence by Theorem \ref{thm:hedge-scale}, there
is a forecast $G_{\sigma^*}$ flatter than $G$ (i.e., $\sigma^*>1$) which attains
lower expected CRP loss. For quadratic loss, a simple calculation yields
\[
d(G_\sigma, G)=(\sigma-1)^2/(2\sigma(\sigma+1)).
\]
Recalling $h(\sigma)=1/\sigma$, it may be shown that
$(h(\sigma)-1)/d(G_\sigma,G)$ is increasing. Thus by Theorem
\ref{thm:hedge-scale}, there is a forecast $G_{\sigma^*}$ sharper than $G$
(i.e., $\sigma^*<1$) which attains lower expected quadratic loss.
\end{example}

\subsection{Exponential families}

For logarithmic loss, the divergence it induces is not symmetric, but we can
characterize the direction of hedging with a specialized argument for
exponential families.

\medskip
\begin{theorem}
\label{thm:hedge-exponential}
Let  $\{p_\eta:\eta>0\}$ be a minimal exponential family of densities, where
$p_\eta(x)=h(x) e^{\eta T(x) -A(\eta)}$. Denote by $\ell$ the logarithmic loss,
and let $\eta_{\text{test}}>0$ be a random variable where we assume $\E T(Y)$
exists and is finite, where $Y$ is a random variable whose conditional
distribution given $\eta_{\text{test}}$ is $p_{\eta_{\text{test}}}$. Then
$\eta^*=(A')^{-1}(\E A'(\eta_{\text{test}}))$ is well-defined and minimizes then
the expected loss $\E \ell(p_\eta, p_{\eta_{\text{test}}})$ over all $\eta>0$.
\end{theorem}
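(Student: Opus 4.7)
The plan is to reduce the problem to a one-dimensional strictly convex optimization in $\eta$, exploiting the linearity of $\log p_\eta(y)$ in $\eta$ together with the standard exponential family identity $A'(\eta) = \E_{p_\eta} T(Y)$.

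First, I would write the logarithmic loss explicitly: for any $y$,
\[
\ell(p_\eta, y) = -\log p_\eta(y) = -\log h(y) - \eta\, T(y) + A(\eta).
\]
Taking the expectation over $Y$ whose conditional distribution given $\eta_{\text{test}}$ is $p_{\eta_{\text{test}}}$, and applying the tower property together with $\E_{p_\eta}T(Y) = A'(\eta)$, I obtain
\[
\E \ell(p_\eta, Y) = A(\eta) - \eta \cdot \E A'(\eta_{\text{test}}) - \E \log h(Y).
\]
Writing $m = \E A'(\eta_{\text{test}})$, which is finite since by the tower property $m = \E T(Y)$ and this is finite by hypothesis, the last term does not depend on $\eta$. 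Minimizing over $\eta$ therefore reduces to minimizing $\phi(\eta) := A(\eta) - \eta m$.

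Next, I would invoke strict convexity to solve the optimization. Because the family is minimal, $T$ is not almost surely constant, so $A''(\eta) = \Var_{p_\eta}(T(Y)) > 0$ for all $\eta > 0$; hence $A$, and in turn $\phi$, is strictly convex. The first-order condition $\phi'(\eta) = A'(\eta) - m = 0$ is then both necessary and sufficient for the global minimum, yielding $\eta^* = (A')^{-1}(m) = (A')^{-1}(\E A'(\eta_{\text{test}}))$ as the unique minimizer.

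Finally, I would verify that $\eta^*$ is well-defined as an element of $(0, \infty)$. Since $A'$ is continuous and strictly increasing on $(0, \infty)$, its image is an open interval $I = (a, b) \subseteq \R$ and $(A')^{-1}$ is well-defined on $I$. Almost surely, $A'(\eta_{\text{test}}) \in I$, so it remains to show $m \in I$. If we had $m = a$, then $A'(\eta_{\text{test}}) - a \geq 0$ would be a nonnegative random variable of mean zero, forcing $A'(\eta_{\text{test}}) = a$ almost surely, contradicting $A'(\eta_{\text{test}}) > a$; the upper endpoint is handled analogously (trivially if $b = \infty$, since $m$ is finite). The main obstacle, such as it is, is this small well-definedness argument for $(A')^{-1}(m)$; once it is in place, the theorem follows immediately from strict convexity of $A$ and the two exponential family identities used above.
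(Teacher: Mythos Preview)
Your proof is correct and follows essentially the same approach as the paper: both reduce the problem to minimizing $A(\eta)-\eta\,\E A'(\eta_{\text{test}})$ using the tower property and the identity $\E_{p_\eta}T(Y)=A'(\eta)$, and both invoke strict convexity of $A$ (from minimality) to conclude. The paper phrases the optimality step via the Bregman divergence of $A$ rather than the first-order condition, and is slightly terser on why $m$ lies in the image of $A'$, but these are cosmetic differences.
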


\begin{proof}
We first show that $\eta^*$ is well-defined. Recall, for a minimal exponential
family, the log-partition function $A$ is continuously differentiable and
strictly convex \citep{wainwright2008graphical}, and $A'$ acts as a bijection
between its domain and image \citep{rockafellar1970convex}, which must then be
an open interval. Furthermore, by a standard identity for exponential families,
\[
\E T(Y) = \E[\E T(Y)|\eta_{\text{test}}] = \E A'(\eta_{\text{test}}).
\]
By assumption, this value is finite, and thus the expectation $\E
A'(\eta_{\text{test}})$ must lie within the image of $A'$, which implies the
existence and uniqueness of $\eta^*$ for which $A'(\eta^*)=\E
A'(\eta_{\text{test}})$. It remains to show that $\eta^*$ minimizes the expected
logarithmic loss by completing the Bregman divergence. We can compute the
expected loss for any $\eta>0$ by
\[
\E\ell(p_\eta, p_{\eta_{\text{test}}}) = -\E[\E\log
p_\eta(Y)|\eta_{\text{test}}] = A(\eta) - \eta \E T(Y) + c
= A(\eta) - \eta A'(\eta^*) + c,
\]
where $c$ is constant. The difference in expected loss between $\eta$ and
$\eta^*$ is therefore
\[
\E \ell(p_\eta, p_{\eta_{\text{test}}}) - \E \ell(p_{\eta^*},
p_{\eta_{\text{test}}}) = A(\eta)-A(\eta^*) - (\eta-\eta^*)A'(\eta^*).
\]
This is the Bregman divergence of the strictly convex function $A$, tangent to
$A$ at $\eta^*$, which is nonnegative and vanishes if and only if $\eta=\eta^*$.
\end{proof}

No assumption was required so far on the relation between $\eta_{\text{train}}$
and $\eta_{\text{test}}$, the natural parameters in training and testing. If we
assume $\eta_{\text{test}}$ is logarithmically symmetric around
$\eta_{\text{train}}$, then
\[
\eta_{\text{train}} = e^{\E \log(\eta_{\text{test}})}.
\]
Compare this with the optimal forecast from the theorem:
\[
\eta^* = (A')^{-1}(\E A'(\eta_{\text{test}})).
\]
We see that whether $\eta^* < \eta_{\text{train}}$ or $\eta^* >
\eta_{\text{train}}$ holds will depend on the log-partition function $A$. We
conclude this subsection with an example where this occurs.

\medskip
\begin{example}[Generalized gamma scale family]
When $p_\eta$ is the generalized gamma density (see Section
\ref{sec:main-exponential}), with scale $\sigma$ inversely proportional to
$\eta$, we can infer the following from Theorem \ref{thm:hedge-exponential}.
Under logarithmic loss, hedging is carried out by inflating the scale relative
to the training population. To see this, recall that for the generalized gamma
family $\eta=1/\sigma^\gamma$ with $\gamma>0$, and the log-partition
function has derivative $A'(\eta)=-(k/\gamma)/\eta$. The natural parameter
$\eta^*$ for which the forecast $p_{\eta^*}$ attains global minimum expected
logarithmic loss is
\[
\frac{1}{\eta^*} = \E \left[\frac{1}{\eta_{\text{test}}}\right].
\]
Now compare the optimal scale $\sigma^*=(\eta^*)^{-1/\gamma}$ with the training
scale $\sigma_{\text{train}}=(\eta_{\text{train}})^{-1/\gamma}$: using Jensen's
inequality,
\[
(\sigma^*)^{-\gamma} = \eta^* = (\E \eta_{\text{test}}^{-1})^{-1}
< e^{\E\log(\eta_{\text{test}})} = \eta_{\text{train}} =
\sigma_{\text{train}}^{-\gamma},
\]
which shows that $\sigma^*>\sigma_{\text{train}}$. From special cases of the
generalized gamma scale family, we may derive results for the exponential,
Laplace, normal, gamma, and Weibull scale families.
\end{example}

\section{Discussion}
\label{sec:discussion}

In this work, we studied asymmetries in the penalization of a broad set of
proper loss functions, including logarithmic, continuous ranked probability,
threshold-weighted CRP, quadratic, spherical, energy, and Dawid--Sebastiani
losses. To recap some highlights, by establishing general results in exponential
families for logarithmic loss, we showed this loss typically penalizes
overestimating scale parameters less severely than underestimating them by the
same amount on a logarithmic scale. Moreover, by introducing the notion of
symmetric rescalable divergences, we showed that in scale families CRP loss
favors sharp forecasts (underestimating the scale), whereas quadratic loss
favors flat forecasts (overestimating the scale). These results are clearly
visible in practice: through experiments, we confirmed the effects anticipated
by the theory on data from Covid-19 mortality, temperature, and retail
forecasts. Finally, under a setting with distribution shift, we showed that
hedging of certain proper loss functions is possible, which can be understood as
an implication of their inherent asymmetry.

We close with some additional related comments and discussion.

\subsection{Confounding effects of aggregation across different scales}

In practice, a loss is often averaged over non-identically distributed target
observations, for example, to evaluate average performance of a forecaster
across different dates, geographic locations, or generally over different tasks.
The problem of confounding effects stemming from the differences between the
tasks is well-recognized, with skill losses a popular yet imperfect remedy
\citep{gneiting2007strictly}. Here we note that the confounding effects of scale
can depend on the loss function being used, and the asymmetries therein.

We demonstrate via two examples that losses which induce symmetric rescalable
divergences with increasing scaling functions (e.g., CRP and energy) place more
weight on observations with large scale versus those with small scale, and
losses with decreasing scaling function (e.g., quadratic) behave in the opposite
way. On the other hand, losses with constant scaling function (e.g., logarithmic
and DS) are indifferent to the scale of the target.

\medskip
\begin{example}[Different specializations]
Consider two tasks with different scale, and two forecasters, Glenn and
Bob. Glenn is relatively good in forecasting one task, and Bob equally better in
the other. We show that which forecaster is awarded least expected loss depends
on the loss function being used. Concretely, in one task with the target
distribution being $G$, suppose that Glenn forecasts $F$ and Bob $H$, and Glenn
achieves lower expected loss,
\[
\ell(F,G) < \ell(H,G).
\]
However, in another task with target distribution $G_\sigma$, suppose that in a
reversal Bob now forecasts $F_\sigma$ and Glenn $H_\sigma$. Here, the
distributions $F_\sigma,G_\sigma,H_\sigma$ should be read as members of
respective scale families, where we assume $\sigma>1$ to signify increased
scale. If the divergence induced by $\ell$ is symmetric and rescalable with
constant scaling function (e.g., logarithmic and DS), it is indifferent to the
scale of the second task and assigns equal expected loss to Glenn and Bob in
total over both tasks,
\[
\ell(F,G)+\ell(H_\sigma,G_\sigma) = \ell(H,G)+\ell(F_\sigma,G_\sigma).
\]
However, when the divergence has increasing scaling function (e.g., CRP and energy),
more emphasis is placed on the second upscaled task, and consequently Bob wins:
\[
\ell(F,G)+\ell(H_\sigma,G_\sigma) > \ell(H,G)+\ell(F_\sigma,G_\sigma).
\]
Lastly, if the divergence has decreasing scaling function (e.g., quadratic, and also
spherical when $F,G,H$ belong to the same scale family), lesser weight is put on
the second task due to its increased scale, and Glenn wins:
\[
\ell(F,G)+\ell(H_\sigma,G_\sigma) < \ell(H,G)+\ell(F_\sigma,G_\sigma).
\]
We have reached three different conclusions by using different loss functions,
with all else being equal.
\end{example}

\medskip
\begin{example}[Missing forecasts]
In this example, Glenn and Bob make the same forecasts, but Bob is missing
forecasts for some target observations. (Missingness is common in some domains,
e.g., in epidemiological forecasting; \citet{cramer2022eval} report that only
28 out of 71 forecasters of Covid-19 mortality submitted full forecasts for at
least 60\% of participating weeks in their analysis.) Concretely, in the first
task, suppose that both Glenn and Bob forecast $F$, when the target distribution
is $G$. In the second task, suppose Glenn forecasts $F_\sigma$, and the
target distribution is $G_\sigma$ with $\sigma>1$, however, Bob makes no
forecast. Which forecaster is awarded the least expected loss---now averaged
over the observed forecasts---depends on the loss function being used. If the
divergence induced by $\ell$ is symmetric and rescalable with constant scaling
function (e.g., logarithmic and DS), then expected loss at the second task
equals that at the first task,
\[
\frac{\ell(F,G)+\ell(F_\sigma,G_\sigma)}{2} = \ell(F,G),
\]
and neither Glenn nor Bob wins. If, however, the divergence has increasing scaling
function (e.g., CRP and energy), then the loss at the second task is greater,
leading to Bob winning:
\[
\frac{\ell(F,G)+\ell(F_\sigma,G_\sigma)}{2} > \ell(F,G).
\]
Conversely, if the divergence has decreasing scaling function (e.g., quadratic, and
also spherical when $F,G$ belong to the same scale family), then the loss at the
second task is lesser, leading to Glenn winning:
\[
\frac{\ell(F,G)+\ell(F_\sigma,G_\sigma)}{2} < \ell(F,G).
\]
Once again, we see that three different conclusions have been reached by using
different loss functions.

\end{example}

\subsection{A closer look at logarithmic loss in exponential families}

In Section \ref{sec:main-exponential}, we considered densities $p_\eta(x)=h(x)
e^{\eta T(x)-A(\eta)}$ in the exponential family $\{p_\eta: \eta>0\}$ and
proved, for $\theta > 1$ and $\ell$ being the logarithmic loss, that
$\ell(p_{\theta\eta}, p_\eta) - \ell(p_{\eta/\theta}, p_\eta)$ is positive,
negative, or zero when $\eta^3A''(\eta)$ is respectively increasing, decreasing,
or constant. Instead of setting a parametrization a priori (i.e., comparing
$\theta\eta$ to $\eta/\theta$), one may instead ask about the sign of
$\ell(p_{\eta_1}, p_\eta) - \ell(p_{\eta_2}, p_\eta)$ for varying $\eta_1$, with
$\eta_2,\eta$ fixed. Figure \ref{fig:gaussian} visualizes this quantity for the
normal scale family, and Theorem \ref{thm:exponential-root} provides a precise
characterization for a wide class of distributions, which includes the normal
scale family, log-normal log-scale family, exponential family, Weibull scale
family, Laplace scale family, gamma scale family, and others.

\begin{figure}[htb]
\centering
\includegraphics[width=\textwidth]{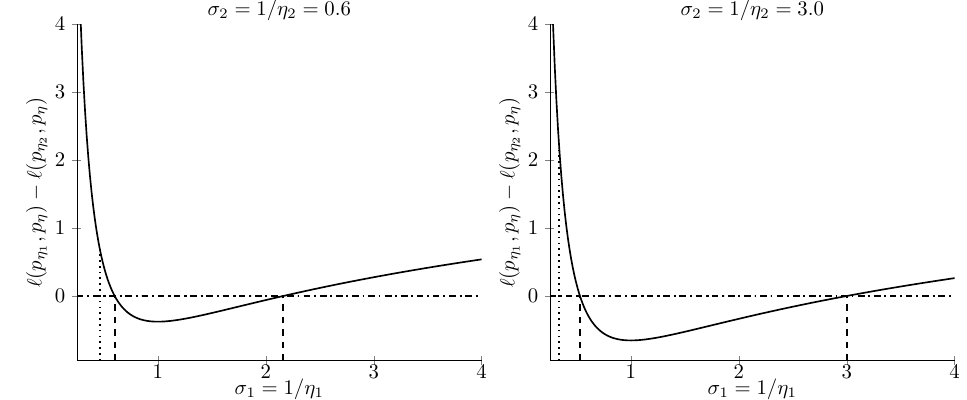}
\caption{\small The function $\ell(p_{\eta_1}, p_\eta) - \ell(p_{\eta_2},
  p_\eta)$ for normal densities with distinct fixed values of $\sigma_2$
  (solid), the roots given by Theorem \ref{thm:exponential-root} in terms of the
  Lambert function (dashed) and the multiplicative inverse of the largest of the
  two roots (dotted).}
\label{fig:gaussian}
\end{figure}

\begin{theorem}
\label{thm:exponential-root}
For an exponential family $\{p_\eta: \eta>0\}$ with log-partition
function of the form $A(\eta) = c_1 \log\eta + c_2$, for constants
$c_1,c_2\in\R$, the roots of $\ell(p_{\eta_1}, p_\eta) - \ell(p_{\eta_2},
p_\eta)$ occur at
\[
-\eta W\left(-\frac{\eta_2}{\eta} \exp\left(-\frac{\eta_2}{\eta}\right)\right),
\]
where $W\colon [-\frac{1}{e}, \infty) \to \R^2$ is the Lambert function, which
satisfies the implicit equation $x = W(x) \exp(W(x))$.
\end{theorem}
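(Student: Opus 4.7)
The plan is to reduce the root equation to a scale-invariant transcendental equation of the form $u e^{-u} = v e^{-v}$, and then invert it explicitly using the defining identity of the Lambert function.

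I would first invoke the Bregman representation derived in the proof of Lemma \ref{lem:acceleration}: for any exponential family with log-partition function $A$,
\[
\ell(p_{\eta_1}, p_\eta) - \ell(p_{\eta_2}, p_\eta) = d_A(\eta_1, \eta) - d_A(\eta_2, \eta),
\]
where $d_A(y,x) = A(y) - A(x) - A'(x)(y-x)$. The $A(\eta)$ and $A'(\eta)\eta$ terms cancel between the two divergences, leaving the simple expression $A(\eta_1) - A(\eta_2) - A'(\eta)(\eta_1 - \eta_2)$. Specializing to $A(\eta) = c_1 \log\eta + c_2$ gives $A'(\eta) = c_1/\eta$, so after setting the difference to zero and dividing by $c_1$ (the case $c_1 = 0$ makes the loss trivially constant in $\eta_1$) the root equation becomes
\[
\log\frac{\eta_1}{\eta_2} = \frac{\eta_1 - \eta_2}{\eta}.
\]

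Next, I would rescale via $u = \eta_1/\eta$ and $v = \eta_2/\eta$, turning the equation into the symmetric form $\log u - u = \log v - v$, which is equivalent to $u e^{-u} = v e^{-v}$. Since $x \mapsto x e^{-x}$ is unimodal on $(0,\infty)$ with maximum $1/e$ attained at $x=1$, this equation has precisely two positive solutions for any attainable right-hand side (one of which is the trivial root $u = v$), and these match up naturally with the two real branches of $W$ on $[-1/e, 0)$.

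Finally, I would invert using the defining identity $W(z)\exp(W(z)) = z$. Setting $y = -u$, the equation $u e^{-u} = v e^{-v}$ rearranges to $y e^{y} = -v e^{-v}$, so $y = W(-v e^{-v})$ and therefore $u = -W(-v e^{-v})$. Substituting back $u = \eta_1/\eta$ and $v = \eta_2/\eta$ yields the stated formula. The argument is essentially mechanical once the Bregman representation from Lemma \ref{lem:acceleration} is in hand; the only mild subtlety is pairing the two real branches of $W$ with the two positive solutions of $u e^{-u} = v e^{-v}$, which is exactly what the two-valued notation $W\colon [-1/e,\infty)\to\R^2$ in the statement records.
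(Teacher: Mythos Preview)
Your proposal is correct and follows essentially the same route as the paper: both reduce the loss difference to $A(\eta_1)-A(\eta_2)-A'(\eta)(\eta_1-\eta_2)$, set it to zero to obtain $\log(\eta_1/\eta_2)=(\eta_1-\eta_2)/\eta$, and then manipulate this into the Lambert form $-(\eta_1/\eta)e^{-\eta_1/\eta}=-(\eta_2/\eta)e^{-\eta_2/\eta}$ before inverting. Your rescaling $u=\eta_1/\eta$, $v=\eta_2/\eta$ makes the algebra a bit tidier than the paper's exponentiation steps, but the argument is the same.
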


\begin{proof}
Representing the difference of logarithmic losses by the equivalent difference
of KL divergences yields
\[
\ell(p_{\eta_1}, p_\eta)-\ell(p_{\eta_2}, p_\eta)
  = d(p_{\eta_1}, p)-d(p_{\eta_2}, p_\eta)
  = \E \log\frac{p_{\eta_2}(Y)}{p_{\eta_1}(Y)}
  = A(\eta_1) - A(\eta_2) - (\eta_1-\eta_2)\E T(Y),
\]
where $Y$ is a random variable with density $p_\eta$. From the exponential
family identity $\E_\eta T(Y)=A'(\eta)$, we have
\[
\ell(p_{\eta_1}, p_\eta)-\ell(p_{\eta_2}, p_\eta)
  = A(\eta_1) - A(\eta_2) - (\eta_1-\eta_2) A'(\eta)
  = c_1\log\eta_1 - c_1\log\eta_2 - \frac{c_1}{\eta} (\eta_1-\eta_2).
\]
Setting the above display to zero gives $\log\frac{\eta_1}{\eta_2} =
\frac{\eta_1-\eta_2}{\eta}$. After rearranging and taking exponents, we have
\[
\exp(\eta_1) \exp(-\eta \log\eta_1) = \exp(\eta_2) \exp(-\eta \log\eta_2).
\]
Exponentiating by $-\frac{1}{\eta}$ and then multiplying by $-\frac{1}{\eta}$,
gives
\[
-\frac{\eta_1}{\eta} \exp\left(-\frac{\eta_1}{\eta}\right)
  = -\frac{\eta_2}{\eta} \exp\left(-\frac{\eta_2}{\eta}\right),
\]
from which the Lambert function is immediately recognized, yielding
\[
-\frac{\eta_1}{\eta} = W\left(-\frac{\eta_2}{\eta}
  \exp\left(-\frac{\eta_2}{\eta}\right) \right).
\]
The result follows by multiplying both sides by $-\eta$.
\end{proof}

\subsection{Threshold-weighted continuous ranked probability loss}
\label{sec:tw-crp}

In the introduction and throughout the paper, we alluded to results available
for threshold-weighted CRP loss. Here we present the details, for weights of the
form $w(y) = y^\alpha$ with $\alpha\in\R$, subject to $w$ remaining nonnegative
(power-weighted CRP loss). This loss has asymmetries in location and scale
families governed by the exponent $\alpha$. Indeed, as a direct consequence of
Lemma \ref{lem:rescalability}, for $\{G_\sigma:\sigma>0\}$ a scale family,
$\sigma>0$, and $\ell$ a power-weighted CRP loss, we have:
\begin{itemize}
\item $\ell(G_\sigma, G) = \ell(G_{1/\sigma}, G)$ if $\alpha = -1$;
\item $\ell(G_\sigma, G) > \ell(G_{1/\sigma}, G)$ if $\alpha > -1$;
\item $\ell(G_\sigma, G) < \ell(G_{1/\sigma}, G)$ if $\alpha < -1$.
\end{itemize}

Moreover, for $\{G_\mu:\mu\in\R\}$ a location family, by a similar argument,
we have:
\begin{itemize}
\item $\ell(G_\mu, G) = \ell(G_{-\mu},G)$ if $\alpha = 0$ or $\mu = 0$;
\item $\ell(G_\mu, G) > \ell(G_{-\mu},G)$ if $\sign(\alpha) = \sign(\mu) \neq
  0$;
\item $\ell(G_\mu, G) < \ell(G_{-\mu},G)$ otherwise.
\end{itemize}

We conclude that power-weighted CRP loss has an inherent trade-off: it can
penalize symmetrically in location families at the expense of asymmetry in scale
families ($\alpha = 0$), or it can penalize symmetrically in scale families at
the expense of asymmetry in location families ($\alpha = -1$). There is no power
weight function that guarantees symmetric penalties in both families
simultaneously. A natural question is now whether there exists a weight
function, not necessarily a power function, that wholly symmetrizes CRP loss in
this context. Our next results answers this in the negative, apart from the
trivial zero function.

\medskip
\begin{proposition}
Let $\{G_\sigma:\sigma>0\}$ and $\{G_\mu:\mu\in\R\}$ be scale and location
families, respectively. For the threshold-weighted CRP loss $\ell$,
\[
\ell(G_\sigma,G_1) = \ell(G_{1/\sigma},G_1) \; \text{and} \; \ell(G_\mu,G_0) =
\ell(G_{-\mu},G_0), \; \text{for all $\sigma>0,\mu\in\R$} \iff w(y) = 0, \;
\text{for all $y \in \cY$}.
\]
In other words, there does not exist a nonzero weight function such that
symmetry is achieved for a location and scale family simultaneously.
\end{proposition}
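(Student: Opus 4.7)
The plan is to extract from the location-family symmetry a strong enough functional constraint to force $w$ to be constant almost everywhere, and then use the scale-family symmetry, via Theorem~\ref{thm:scale-family}, to rule out any nonzero constant. Throughout I take $\cY = \R$, which is the natural setting in which both location and scale families exist. Since the induced divergence is $d(F,G) = \int w(y)(F(y)-G(y))^2\,dy$, and any location family has $G_\mu(y) = H(y-\mu)$ for some fixed CDF $H = G_0$, the equality $\ell(G_\mu,G_0) = \ell(G_{-\mu},G_0)$ becomes, after the substitution $y\mapsto y+\mu$ in one integral,
\[
\int \bigl[w(y+\mu) - w(y)\bigr]\bigl[H(y+\mu)-H(y)\bigr]^2\,dy = 0,
\]
which must hold for every CDF $H$ and every $\mu \in \R$.

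\textbf{From the location constraint to a.e.\ constant $w$.} I would probe this identity with the simplest possible $H$: the CDF of a point mass at $a$ (or, to avoid atoms, a narrow smooth bump concentrating at $a$). Then $[H(y+\mu)-H(y)]^2$ is the indicator of $[a-\mu,a)$ for $\mu>0$, and the identity collapses to
\[
\int_a^{a+\mu} w(z)\,dz = \int_{a-\mu}^a w(y)\,dy \quad \text{for all } a\in\R,\ \mu>0.
\]
Writing $W(x) := \int_0^x w(t)\,dt$, this is precisely Jensen's midpoint equation $W(a+\mu) + W(a-\mu) = 2W(a)$, valid at every scale $\mu > 0$ and every basepoint $a$. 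Since $w \ge 0$ is locally integrable, $W$ is nondecreasing and continuous; a continuous midpoint-affine function on $\R$ is affine (by iterating the midpoint identity on dyadics and closing by continuity), so $W(x) = c_1 x + c_0$, and hence $w \equiv c_1$ almost everywhere.

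\textbf{From the scale constraint to $c_1=0$.} With $w\equiv c_1$, the threshold-weighted CRP loss is exactly $c_1$ times the standard CRP loss. The scale-symmetry hypothesis $\ell(G_\sigma,G_1) = \ell(G_{1/\sigma},G_1)$ for all scale families therefore amounts to the claim that standard CRP loss is symmetric in every scale family. But Theorem~\ref{thm:scale-family} asserts the strict opposite: $\ell(G_\sigma,G) > \ell(G_{1/\sigma},G)$ for any scale family and any $\sigma > 1$. Hence $c_1 = 0$, and $w \equiv 0$.

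The main obstacle is the passage from the integrated identity to the pointwise conclusion on $w$ without imposing regularity beyond nonnegativity and local integrability; recognizing that identity as Jensen's midpoint equation on $W$ is what makes this clean, reducing the problem to a classical rigidity statement for monotone midpoint-affine functions. A secondary technical point is the use of point-mass $H$ in the location step, which is legitimate either because the divergence formula requires no density, or by approximating with narrow continuous bumps and passing to the limit in the integrated identity.
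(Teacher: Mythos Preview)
Your proof is correct and takes a genuinely different route from the paper's. The paper argues that the integral identities, quantified over all location and scale families, force the pointwise constraints $w(y+\mu)=w(y)$ and $\sigma w(y\sigma)=w(y)$, which together admit only the zero solution. That passage from integral to pointwise is asserted rather than justified. You instead use the location constraint alone, probed with point-mass CDFs, to reduce to Jensen's midpoint equation on the antiderivative $W$, and invoke continuity of $W$ to force $w$ constant a.e.; the scale constraint is then dispatched by appealing to the strict scale asymmetry of CRP from Theorem~\ref{thm:scale-family}. Your argument is cleaner on the integral-to-pointwise step and nicely reuses an earlier theorem; the paper's argument is shorter and treats the two constraints symmetrically, but leans on an unstated density-type claim. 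One small remark: the paper actually assumes $w$ is integrable on $\cY$, under which your first step already gives $w\equiv 0$ (a nonzero constant is not integrable on $\R$), so Step~2 is strictly needed only under the weaker local-integrability hypothesis you adopt---which is the right hypothesis if one wants ordinary CRP ($w\equiv 1$) to be in scope.
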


\begin{proof}
The ``if'' direction is obvious. For the ``only if'' direction, observe that
$\ell(G_\sigma, G) = \ell(G_{1/\sigma}, G)$ implies
\[
\int_\cY \sigma w(y\sigma) (G(y) - G(y\sigma))^2 dy = \int_\cY w(y)
(G(y) - G(y\sigma))^2 dy \implies \sigma w(y\sigma) = w(y),
\]
whereas $\ell(G_\mu, G) = \ell(G_{-\mu}, G)$ implies
\[
\int_\cY w(y+\mu) (G(y) - G(y+\mu))^2 dy = \int_\cY w(y) (G(y) - G(y+\mu))^2 dy
\implies w(y+\mu) = w(y).
\]
The only weight function that satisfies both conditions for all $\sigma > 0$ and
$\mu \in \R$ is the zero function: $w(y) = 0$ for all $y \in \cY$.
\end{proof}

Therefore, the trade-off between ensuring symmetric penalties either in location
or in scale families is not a peculiarity of power-weighted CRP loss, but
extends to the more general threshold-weighted version. In this particular
sense, CRP loss is unsymmetrizable. Exploring whether quantile-weighted CRP loss
behaves similarly remains an avenue for future work.

\subsection*{Acknowledgements}

We thank members of the Delphi research group, as well as Tilmann Gneiting, Evan
Ray, and Nicholas Reich for useful discussions. We also thank the participants
of the Berkeley Statistics probabilistic forecasting reading group. This work
was supported by Centers for Disease Control and Prevention (CDC) grant number
75D30123C15907.

{\RaggedRight
\bibliographystyle{plainnat}
\bibliography{refs,general}}

\begin{thebibliography}{89}
\providecommand{\natexlab}[1]{#1}
\providecommand{\url}[1]{\texttt{#1}}
\expandafter\ifx\csname urlstyle\endcsname\relax
  \providecommand{\doi}[1]{doi: #1}\else
  \providecommand{\doi}{doi: \begingroup \urlstyle{rm}\Url}\fi

\bibitem[Allen et~al.(2023{\natexlab{a}})Allen, Bhend, Martius, and
  Ziegel]{allen2023weighted}
Sam Allen, Jonas Bhend, Olivia Martius, and Johanna Ziegel.
\newblock Weighted verification tools to evaluate univariate and multivariate
  probabilistic forecasts for high-impact weather events.
\newblock \emph{Weather and Forecasting}, 38\penalty0 (3):\penalty0 499--516,
  2023{\natexlab{a}}.

\bibitem[Allen et~al.(2023{\natexlab{b}})Allen, Ginsbourger, and
  Ziegel]{allen2023evaluating}
Sam Allen, David Ginsbourger, and Johanna Ziegel.
\newblock Evaluating forecasts for high-impact events using transformed kernel
  scores.
\newblock \emph{SIAM/ASA Journal on Uncertainty Quantification}, 11\penalty0
  (3):\penalty0 906--940, 2023{\natexlab{b}}.

\bibitem[Amisano and Giacomini(2007)]{amisano2007comparing}
Gianni Amisano and Raffaella Giacomini.
\newblock Comparing density forecasts via weighted likelihood ratio tests.
\newblock \emph{Journal of Business \& Economic Statistics}, 25\penalty0
  (2):\penalty0 177--190, 2007.

\bibitem[Arfken et~al.(2011)Arfken, Weber, and Harris]{arfken2011mathematical}
George~B. Arfken, Hans~J. Weber, and Frank~E. Harris.
\newblock \emph{Mathematical Methods for Physicists: A Comprehensive Guide}.
\newblock Academic Press, 2011.

\bibitem[Baringhaus and Franz(2004)]{baringhaus2004new}
Ludwig Baringhaus and Carsten Franz.
\newblock On a new multivariate two-sample test.
\newblock \emph{Journal of Multivariate Analysis}, 88\penalty0 (1):\penalty0
  190--206, 2004.

\bibitem[Benedetti(2010)]{benedetti2010scoring}
Riccardo Benedetti.
\newblock Scoring rules for forecast verification.
\newblock \emph{Monthly Weather Review}, 138\penalty0 (1):\penalty0 203--211,
  2010.

\bibitem[Bernardo(1979)]{bernardo1979expected}
José~M. Bernardo.
\newblock Expected information as expected utility.
\newblock \emph{Annals of Statistics}, 7\penalty0 (3):\penalty0 686--690, 1979.

\bibitem[Bickel(2007)]{bickel2007some}
J.~Eric Bickel.
\newblock Some comparisons among quadratic, spherical, and logarithmic scoring
  rules.
\newblock \emph{Decision Analysis}, 4\penalty0 (2):\penalty0 49--65, 2007.

\bibitem[Bracher et~al.(2021)Bracher, Ray, Gneiting, and
  Reich]{bracher2021evaluating}
Johannes Bracher, Evan~L. Ray, Tilmann Gneiting, and Nicholas~G. Reich.
\newblock Evaluating epidemic forecasts in an interval format.
\newblock \emph{PLOS Computational Biology}, 17\penalty0 (2):\penalty0 1--15,
  2021.

\bibitem[Brier(1950)]{brier1950verification}
Glenn~W. Brier.
\newblock Verification of forecasts expressed in terms of probability.
\newblock \emph{Monthly Weather Review}, 78\penalty0 (1):\penalty0 1--3, 1950.

\bibitem[Buja et~al.(2005)Buja, Stuetzle, and Shen]{buja2005loss}
Andreas Buja, Werner Stuetzle, and Yi~Shen.
\newblock Loss functions for binary class probability estimation and
  classification: {Structure} and applications.
\newblock Technical Report, The Wharton School, University of Pennsylvania,
  2005.

\bibitem[Carvalho(2016)]{carvalho2016overview}
Arthur Carvalho.
\newblock An overview of applications of proper scoring rules.
\newblock \emph{Decision Analysis}, 13\penalty0 (4):\penalty0 223--242, 2016.

\bibitem[Clement and Zaoui(2024)]{clement2024distributional}
Dombry Clement and Ahmed Zaoui.
\newblock Distributional regression: {CRPS}-error bounds for model fitting,
  model selection and convex aggregation.
\newblock In \emph{Advances in Neural Information Processing Systems}, 2024.

\bibitem[Cramer et~al.(2022{\natexlab{a}})Cramer, Huang, Wang, Ray, Cornell,
  Bracher, Brennen, Rivadeneira, Gerding, House, et~al.]{cramer2022us}
Estee~Y. Cramer, Yuxin Huang, Yijin Wang, Evan~L. Ray, Matthew Cornell,
  Johannes Bracher, Andrea Brennen, Alvaro J.~Castro Rivadeneira, Aaron
  Gerding, Katie~H. House, et~al.
\newblock The {United States COVID-19 Forecast Hub} dataset.
\newblock \emph{Scientific Data}, 9, 2022{\natexlab{a}}.

\bibitem[Cramer et~al.(2022{\natexlab{b}})Cramer, Ray, Lopez, Bracher, Brennen,
  Rivadeneira, Gerding, Gneiting, House, Huang, et~al.]{cramer2022eval}
Estee~Y. Cramer, Evan~L. Ray, Velma~K. Lopez, Johannes Bracher, Andrea Brennen,
  Alvaro J.~Castro Rivadeneira, Aaron Gerding, Tilmann Gneiting, Katie~H.
  House, Yuxin Huang, et~al.
\newblock Evaluation of individual and ensemble probabilistic forecasts of
  {COVID}-19 mortality in the {United States}.
\newblock \emph{Proceedings of the National Academy of Sciences}, 119\penalty0
  (15):\penalty0 e2113561119, 2022{\natexlab{b}}.

\bibitem[Cramér(1928)]{cramer1928composition}
Harald Cramér.
\newblock On the composition of elementary errors.
\newblock \emph{Scandinavian Actuarial Journal}, 1928\penalty0 (1):\penalty0
  141--180, 1928.

\bibitem[Dawid(1984)]{dawid1984present}
A.~Philip Dawid.
\newblock Present position and potential developments: {Some} personal views
  statistical theory the prequential approach.
\newblock \emph{Journal of the Royal Statistical Society: Series A},
  147\penalty0 (2):\penalty0 278--290, 1984.

\bibitem[Dawid(2007)]{dawid2007geometry}
A.~Philip Dawid.
\newblock The geometry of proper scoring rules.
\newblock \emph{Annals of the Institute of Statistical Mathematics},
  59:\penalty0 77--93, 2007.

\bibitem[Dawid and Sebastiani(1999)]{dawid1999coherent}
A.~Philip Dawid and Paola Sebastiani.
\newblock Coherent dispersion criteria for optimal experimental design.
\newblock \emph{Annals of Statistics}, pages 65--81, 1999.

\bibitem[{de Finetti}(1975)]{definetti1970theory}
Bruno {de Finetti}.
\newblock \emph{Theory of Probability}.
\newblock John Wiley \& Sons, 1975.

\bibitem[Dietmüller et~al.(2024)Dietmüller, Alcoz, and
  Vanbever]{dietmuller2024fitnets}
Alexander Dietmüller, Albert~Gran Alcoz, and Laurent Vanbever.
\newblock {FitNets}: {An} adaptive framework to learn accurate traffic
  distributions.
\newblock arXiv: 2405.10931, 2024.

\bibitem[Dunn et~al.(2020)Dunn, Alexander, Donat, Zhang, Bador, Herold,
  et~al.]{dunn2020development}
Robert J.~H. Dunn, Lisa~V. Alexander, Markus~G. Donat, Xuebin Zhang, Margot
  Bador, Nicholas Herold, et~al.
\newblock Development of an updated global land in situ-based data set of
  temperature and precipitation extremes: {HadEX3}.
\newblock \emph{Journal of Geophysical Research: Atmospheres}, 125\penalty0
  (16):\penalty0 e2019JD032263, 2020.

\bibitem[Ehm et~al.(2016)Ehm, Gneiting, Jordan, and
  Kr{\"u}ger]{ehm2016quantiles}
Werner Ehm, Tilmann Gneiting, Alexander Jordan, and Fabian Kr{\"u}ger.
\newblock Of quantiles and expectiles: {Consistent} scoring functions,
  {Choquet} representations an forecast rankings.
\newblock \emph{Journal of the Royal Statistical Society: Series B},
  78\penalty0 (3):\penalty0 505--562, 2016.

\bibitem[Eyring et~al.(2016)Eyring, Bony, Meehl, Senior, Stevens, Stouffer, and
  Taylor]{eyring2016overview}
Veronika Eyring, Sandrine Bony, Gerald~A. Meehl, Catherine~A. Senior, Bjorn
  Stevens, Ronald~J. Stouffer, and Karl~E. Taylor.
\newblock Overview of the {Coupled Model Intercomparison Project Phase 6
  (CMIP6)} experimental design and organization.
\newblock \emph{Geoscientific Model Development}, 9\penalty0 (5):\penalty0
  1937--1958, 2016.

\bibitem[Feldmann et~al.(2015)Feldmann, Scheuerer, and
  Thorarinsdottir]{feldmann2015spatial}
Kira Feldmann, Michael Scheuerer, and Thordis~L. Thorarinsdottir.
\newblock Spatial postprocessing of ensemble forecasts for temperature using
  nonhomogeneous gaussian regression.
\newblock \emph{Monthly Weather Review}, 143\penalty0 (3):\penalty0 955--971,
  2015.

\bibitem[Friedman(2001)]{friedman2001greedy}
Jerome Friedman.
\newblock Greedy function approximation: {A} gradient boosting machine.
\newblock \emph{Annals of Statistics}, 29\penalty0 (5):\penalty0 1190--1232,
  2001.

\bibitem[Galbraith and van Norden(2012)]{galbraith2012assessing}
John~W. Galbraith and Simon van Norden.
\newblock Assessing gross domestic product and inflation probability forecasts
  derived from bank of england fan charts.
\newblock \emph{Journal of the Royal Statistical Society: Series A},
  175\penalty0 (3):\penalty0 713--727, 2012.

\bibitem[Gneiting and Katzfuss(2014)]{gneiting2014probabilistic}
Tilmann Gneiting and Matthias Katzfuss.
\newblock Probabilistic forecasting.
\newblock \emph{Annual Review of Statistics and Its Application}, 1:\penalty0
  125--151, 2014.

\bibitem[Gneiting and Raftery(2007)]{gneiting2007strictly}
Tilmann Gneiting and Adrian~E. Raftery.
\newblock Strictly proper scoring rules, prediction, and estimation.
\newblock \emph{Journal of the American Statistical Association}, 102\penalty0
  (477):\penalty0 359--378, 2007.

\bibitem[Gneiting and Ranjan(2011)]{gneiting2011comparing}
Tilmann Gneiting and Roopesh Ranjan.
\newblock Comparing density forecasts using threshold- and quantile-weighted
  scoring rules.
\newblock \emph{Journal of Business \& Economic Statistics}, 29\penalty0
  (3):\penalty0 411--422, 2011.

\bibitem[Gneiting et~al.(2005)Gneiting, Raftery, Westveld, and
  Goldman]{gneiting2005calibrated}
Tilmann Gneiting, Adrian~E. Raftery, Anton~H. Westveld, and Tom Goldman.
\newblock Calibrated probabilistic forecasting using ensemble model output
  statistics and minimum {CRPS} estimation.
\newblock \emph{Monthly Weather Review}, 133\penalty0 (5):\penalty0 1098--1118,
  2005.

\bibitem[Gneiting et~al.(2007)Gneiting, Balabdaoui, and
  Raftery]{gneiting2007probabilistic}
Tilmann Gneiting, Fadoua Balabdaoui, and Adrian~E. Raftery.
\newblock Probabilistic forecasts, calibration and sharpness.
\newblock \emph{Journal of the Royal Statistical Society: Series B},
  69\penalty0 (2):\penalty0 243--268, 2007.

\bibitem[Good(1952)]{good1952rational}
I.~J. Good.
\newblock Rational decisions.
\newblock \emph{Journal of the Royal Statistical Society: Series B},
  14\penalty0 (1):\penalty0 107--114, 1952.

\bibitem[Good(1971)]{good1971comment}
I.~J. Good.
\newblock Comment on ``{Measuring} information and uncertainty,'' by {R. J.
  Buehler}.
\newblock In \emph{Foundations of Statistical Inference}, pages 337--339. Holt,
  Rinehart and Winston of Canada, 1971.

\bibitem[Henzi et~al.(2021)Henzi, Ziegel, and Gneiting]{henzi2021isotonic}
Alexander Henzi, Johanna~F. Ziegel, and Tilmann Gneiting.
\newblock Isotonic distributional regression.
\newblock \emph{Journal of the Royal Statistical Society Series B: Statistical
  Methodology}, 83\penalty0 (5):\penalty0 963--993, 2021.

\bibitem[Hong et~al.(2016)Hong, Pinson, Fan, Zareipour, Troccoli, and
  Hyndman]{hong2016probabilistic}
Tao Hong, Pierre Pinson, Shu Fan, Hamidreza Zareipour, Alberto Troccoli, and
  Rob~J. Hyndman.
\newblock Probabilistic energy forecasting: {Global} energy forecasting
  competition 2014 and beyond.
\newblock \emph{International Journal of forecasting}, 32\penalty0
  (3):\penalty0 896--913, 2016.

\bibitem[Hora and Kardeş(2015)]{hora2015calibration}
Stephen~C. Hora and Erim Kardeş.
\newblock Calibration, sharpness and the weighting of experts in a linear
  opinion pool.
\newblock \emph{Annals of Operations Research}, 229\penalty0 (1):\penalty0
  429--450, 2015.

\bibitem[Jolliffe and Stephenson(2012)]{jolliffe2012forecast}
Ian~T. Jolliffe and David~B. Stephenson.
\newblock \emph{Forecast Verification: A Practitioner's Guide in Atmospheric
  Science}.
\newblock John Wiley \& Sons, 2012.

\bibitem[Jordan et~al.(2011)Jordan, Chen, Gasparini, Madariaga, Main,
  Marzocchi, Papadopoulos, Sobolev, Yamaoka, and Zschau]{jordan2011operational}
Thomas~H. Jordan, Yun-Tai Chen, Paolo Gasparini, Raul Madariaga, Ian Main,
  Warner Marzocchi, Gerassimos Papadopoulos, Gennady Sobolev, Koshun Yamaoka,
  and Jochen Zschau.
\newblock Operational earthquake forecasting: {State} of knowledge and
  guidelines for utilization.
\newblock \emph{Annals of Geophysics}, 54\penalty0 (4):\penalty0 315--391,
  2011.

\bibitem[Keener(2010)]{keener2010theoretical}
Robert~W. Keener.
\newblock \emph{Theoretical Statistics: Topics for a Core Course}.
\newblock Springer, 2010.

\bibitem[Kim et~al.(2009)Kim, Koh, Boyd, and Gorinevsky]{kim2009trend}
Seung-Jean Kim, Kwangmoo Koh, Stephen Boyd, and Dimitry Gorinevsky.
\newblock $\ell_1$ trend filtering.
\newblock \emph{SIAM Review}, 51\penalty0 (2):\penalty0 339--360, 2009.

\bibitem[Kochkov et~al.(2024)Kochkov, Yuval, Langmore, Norgaard, Smith, Mooers,
  Kl{\"o}wer, Lottes, Rasp, D{\"u}ben, et~al.]{kochkov2024neural}
Dmitrii Kochkov, Janni Yuval, Ian Langmore, Peter Norgaard, Jamie Smith,
  Griffin Mooers, Milan Kl{\"o}wer, James Lottes, Stephan Rasp, Peter
  D{\"u}ben, et~al.
\newblock Neural general circulation models for weather and climate.
\newblock \emph{Nature}, 632\penalty0 (8027):\penalty0 1060--1066, 2024.

\bibitem[Koenker and Machado(1999)]{koenker1999goodness}
Roger Koenker and José A.~F. Machado.
\newblock Goodness of fit and related inference processes for quantile
  regression.
\newblock \emph{Journal of the American Statistical Association}, 94\penalty0
  (448):\penalty0 1296--1310, 1999.

\bibitem[Kullback and Leibler(1951)]{kullback1951information}
Solomon Kullback and Richard~A. Leibler.
\newblock On information and sufficiency.
\newblock \emph{Annals of Mathematical Statistics}, 22\penalty0 (1):\penalty0
  79--86, 1951.

\bibitem[Laio and Tamea(2007)]{laio2007verification}
Francesco Laio and Stefania Tamea.
\newblock Verification tools for probabilistic forecasts of continuous
  hydrological variables.
\newblock \emph{Hydrology and Earth System Sciences}, 11:\penalty0 1267--1277,
  2007.

\bibitem[Lee and Lee(2022)]{lee2022pseudo}
Kyungmin Lee and Hyeongkeun Lee.
\newblock Pseudo-spherical knowledge distillation.
\newblock In \emph{Proceedings of the International Joint Conference on
  Artificial Intelligence}, 2022.

\bibitem[Lerch et~al.(2017)Lerch, Thorarinsdottir, Ravazzolo, and
  Gneiting]{lerch2017forecaster}
Sebastian Lerch, Thordis~L. Thorarinsdottir, Francesco Ravazzolo, and Tilmann
  Gneiting.
\newblock Forecaster's dilemma: {Extreme} events and forecast evaluation.
\newblock \emph{Statistical Science}, 32\penalty0 (1):\penalty0 106--127, 2017.

\bibitem[Leutbecher and Palmer(2008)]{leutbecher2008ensemble}
Martin Leutbecher and Tim~N. Palmer.
\newblock Ensemble forecasting.
\newblock \emph{Journal of Computational Physics}, 227\penalty0 (7):\penalty0
  3515--3539, 2008.

\bibitem[Machete(2013)]{machete2013contrasting}
Reason~L. Machete.
\newblock Contrasting probabilistic scoring rules.
\newblock \emph{Journal of Statistical Planning and Inference}, 143\penalty0
  (10):\penalty0 1781--1790, 2013.

\bibitem[Makridakis et~al.(1982)Makridakis, Andersen, Carbone, Fildes, Hibon,
  Lewandowski, Newton, Parzen, and Winkler]{makridakis1982accuracy}
Spyros Makridakis, Allan Andersen, Robert Carbone, Robert Fildes, Michèle
  Hibon, Rudolf Lewandowski, Joseph Newton, Emanuel Parzen, and Robert~L.
  Winkler.
\newblock The accuracy of extrapolation (time series) methods: {Results} of a
  forecasting competition.
\newblock \emph{Journal of Forecasting}, 1\penalty0 (2):\penalty0 111--153,
  1982.

\bibitem[Makridakis et~al.(2022)Makridakis, Spiliotis, Assimakopoulos, Chen,
  Gaba, Tsetlin, and Winkler]{makridakis2022uncertainty}
Spyros Makridakis, Evangelos Spiliotis, Vassilios Assimakopoulos, Zhi Chen,
  Anil Gaba, Ilia Tsetlin, and Robert~L. Winkler.
\newblock The {M5} uncertainty competition: {Results}, findings and
  conclusions.
\newblock \emph{International Journal of Forecasting}, 38\penalty0
  (4):\penalty0 1365--1385, 2022.

\bibitem[Matheson and Winkler(1976)]{matheson1976scoring}
James~E. Matheson and Robert~L. Winkler.
\newblock Scoring rules for continuous probability distributions.
\newblock \emph{Management Science}, 22\penalty0 (10):\penalty0 1087--1096,
  1976.

\bibitem[Meehl et~al.(1997)Meehl, Boer, Covey, Latif, and
  Stouffer]{meehl1997intercomparison}
Gerald~A. Meehl, George~J. Boer, Curt Covey, Mojib Latif, and Ronald~J.
  Stouffer.
\newblock Intercomparison makes for a better climate model.
\newblock \emph{Eos, Transactions American Geophysical Union}, 78\penalty0
  (41):\penalty0 445--451, 1997.

\bibitem[Merkle and Steyvers(2013)]{merkle2013choosing}
Edgar~C. Merkle and Mark Steyvers.
\newblock Choosing a strictly proper scoring rule.
\newblock \emph{Decision Analysis}, 10\penalty0 (4):\penalty0 292--304, 2013.

\bibitem[Meyer and Held(2014)]{meyer2014power}
Sebastian Meyer and Leonhard Held.
\newblock Power-law models for infectious disease spread.
\newblock \emph{Annals of Applied Statistics}, 3\penalty0 (8):\penalty0
  1612--1639, 2014.

\bibitem[Murphy and Epstein(1967)]{murphy1967note}
Allan~H. Murphy and Edward~S. Epstein.
\newblock A note on probability forecasts and ``hedging''.
\newblock \emph{Journal of Applied Meteorology}, 6\penalty0 (6):\penalty0
  1002--1004, 1967.

\bibitem[Nielsen(2019)]{nielsen2019kullback}
Frank Nielsen.
\newblock On the {Kullback-Leibler} divergence between location-scale
  densities.
\newblock arXiv: 1904.10428, 2019.

\bibitem[Pappenberger et~al.(2011)Pappenberger, Thielen, and
  Del~Medico]{pappenberger2011impact}
Florian Pappenberger, Jutta Thielen, and Mauro Del~Medico.
\newblock The impact of weather forecast improvements on large scale hydrology:
  {Analysing} a decade of forecasts of the {European Flood Alert System}.
\newblock \emph{Hydrological Processes}, 25\penalty0 (7):\penalty0 1091--1113,
  2011.

\bibitem[Parry et~al.(2012)Parry, Dawid, and Lauritzen]{parry2012proper}
Matthew Parry, A.~Philip Dawid, and Steffen Lauritzen.
\newblock Proper local scoring rules.
\newblock \emph{Annals of Statistics}, 40\penalty0 (1):\penalty0 561--592,
  2012.

\bibitem[Patton(2020)]{patton2020comparing}
Andrew~J. Patton.
\newblock Comparing possibly misspecified forecasts.
\newblock \emph{Journal of Business \& Economic Statistics}, 38\penalty0
  (4):\penalty0 796--809, 2020.

\bibitem[Raftery and
  {\v{S}}ev{\v{c}}{\'\i}kov{\'a}(2023)]{raftery2023probabilistic}
Adrian~E. Raftery and Hana {\v{S}}ev{\v{c}}{\'\i}kov{\'a}.
\newblock Probabilistic population forecasting: {Short} to very long-term.
\newblock \emph{International Journal of Forecasting}, 39\penalty0
  (1):\penalty0 73--97, 2023.

\bibitem[Rasp and Lerch(2018)]{rasp2018neural}
Stephan Rasp and Sebastian Lerch.
\newblock Neural networks for postprocessing ensemble weather forecasts.
\newblock \emph{Monthly Weather Review}, 146\penalty0 (11):\penalty0
  3885--3900, 2018.

\bibitem[Reich et~al.(2019)Reich, Brooks, Fox, Kandula, McGowan, Moore, Osthus,
  Ray, Tushar, Yamana, et~al.]{reich2019collaborative}
Nicholas~G. Reich, Logan~C. Brooks, Spencer~J. Fox, Sasikiran Kandula, Craig~J.
  McGowan, Evan Moore, Dave Osthus, Evan~L. Ray, Abhinav Tushar, Teresa~K.
  Yamana, et~al.
\newblock A collaborative multiyear, multimodel assessment of seasonal
  influenza forecasting in the united states.
\newblock \emph{Proceedings of the National Academy of Sciences}, 116\penalty0
  (8):\penalty0 3146--3154, 2019.

\bibitem[Resin et~al.(2024)Resin, Wolffram, Bracher, and
  Dimitriadis]{resin2024shift}
Johannes Resin, Daniel Wolffram, Johannes Bracher, and Timo Dimitriadis.
\newblock Shift-dispersion decompositions of {Wasserstein} and {Cram\'er}
  distances.
\newblock arXiv: 2408.09770, 2024.

\bibitem[Roby(1965)]{thornton1965belief}
Thornton~B. Roby.
\newblock Belief states and the uses of evidence.
\newblock \emph{Behavioral Science}, 10\penalty0 (3):\penalty0 255--270, 1965.

\bibitem[Rockafellar(1970)]{rockafellar1970convex}
R.~Tyrrell Rockafellar.
\newblock \emph{Convex Analysis}.
\newblock Princeton University Press, 1970.

\bibitem[Savage(1971)]{savage1971elicitation}
Leonard~J. Savage.
\newblock Elicitation of personal probabilities and expectations.
\newblock \emph{Journal of the American Statistical Association}, 66\penalty0
  (336):\penalty0 783--801, 1971.

\bibitem[Scheuerer and Hamill(2015)]{scheuerer2015variogram}
Michael Scheuerer and Thomas~M. Hamill.
\newblock Variogram-based proper scoring rules for probabilistic forecasts of
  multivariate quantities.
\newblock \emph{Monthly Weather Review}, 143\penalty0 (4):\penalty0 1321--1334,
  2015.

\bibitem[Scheuerer and M{\"o}ller(2015)]{scheuerer2015probabilistic}
Michael Scheuerer and David M{\"o}ller.
\newblock Probabilistic wind speed forecasting on a grid based on ensemble
  model output statistics.
\newblock \emph{Annals of Applied Statistics}, 9\penalty0 (3):\penalty0
  1328--1349, 2015.

\bibitem[Schorlemmer et~al.(2018)Schorlemmer, Werner, Marzocchi, Jordan, Ogata,
  Jackson, Mak, Rhoades, Gerstenberger, Hirata,
  et~al.]{schorlemmer2018collaboratory}
Danijel Schorlemmer, Maximilian~J. Werner, Warner Marzocchi, Thomas~H. Jordan,
  Yosihiko Ogata, David~D. Jackson, Sum Mak, David~A. Rhoades, Matthew~C.
  Gerstenberger, Naoshi Hirata, et~al.
\newblock The collaboratory for the study of earthquake predictability:
  {Achievements} and priorities.
\newblock \emph{Seismological Research Letters}, 89\penalty0 (4):\penalty0
  1305--1313, 2018.

\bibitem[Schulzweida(2023)]{schulzweida2023cdo}
Uwe Schulzweida.
\newblock {CDO} user guide, October 2023.
\newblock URL \url{https://doi.org/10.5281/zenodo.10020800}.

\bibitem[Scott(1992)]{scott1992multivariate}
David~W. Scott.
\newblock \emph{Multivariate Density Estimation: Theory, Practice, and
  Visualization}.
\newblock Wiley, 1992.

\bibitem[Selten(1998)]{selten1998axiomatic}
Reinhard Selten.
\newblock Axiomatic characterization of the quadratic scoring rule.
\newblock \emph{Experimental Economics}, 1:\penalty0 43--61, 1998.

\bibitem[Shao et~al.(2024)Shao, Meng, Liu, and Zhou]{shao2024language}
Chenze Shao, Fandong Meng, Yijin Liu, and Jie Zhou.
\newblock Language generation with strictly proper scoring rules.
\newblock In \emph{Proceedings of the International Conference on Machine
  Learning}, 2024.

\bibitem[Small(2007)]{small2007functional}
Christopher~G. Small.
\newblock \emph{Functional Equations and How to Solve Them}.
\newblock Springer, 2007.

\bibitem[Székely and Rizzo(2005)]{szekely2005new}
Gábor~J. Székely and Maria~L. Rizzo.
\newblock A new test for multivariate normality.
\newblock \emph{Journal of Multivariate Analysis}, 93\penalty0 (1):\penalty0
  58--80, 2005.

\bibitem[Székely and Rizzo(2013)]{szekely2013energy}
Gábor~J. Székely and Maria~L. Rizzo.
\newblock Energy statistics: {A} class of statistics based on distances.
\newblock \emph{Journal of statistical planning and inference}, 143\penalty0
  (8):\penalty0 1249--1272, 2013.

\bibitem[Taillardat et~al.(2023)Taillardat, Fougères, Naveau, and {de
  Fondeville}]{taillardat2023evaluating}
Maxime Taillardat, Anne-Laure Fougères, Philippe Naveau, and Raphaël {de
  Fondeville}.
\newblock Evaluating probabilistic forecasts of extremes using continuous
  ranked probability score distributions.
\newblock \emph{International Journal of Forecasting}, 39\penalty0
  (3):\penalty0 1448--1459, 2023.

\bibitem[Thorarinsdottir et~al.(2020)Thorarinsdottir, Sillmann, Haugen,
  Gissibl, and Sandstad]{thorarinsdottir2020evaluation}
Thordis~L. Thorarinsdottir, Jana Sillmann, Marion Haugen, Nadine Gissibl, and
  Marit Sandstad.
\newblock Evaluation of {CMIP5} and {CMIP6} simulations of historical surface
  air temperature extremes using proper evaluation methods.
\newblock \emph{Environmental Research Letters}, 15\penalty0 (12):\penalty0
  124041, December 2020.

\bibitem[Tibshirani(2014)]{tibshirani2014adaptive}
Ryan~J. Tibshirani.
\newblock Adaptive piecewise polynomial estimation via trend filtering.
\newblock \emph{Annals of Statistics}, 42\penalty0 (1):\penalty0 285--323,
  2014.

\bibitem[Wainwright and Jordan(2008)]{wainwright2008graphical}
Martin~J. Wainwright and Michael~I. Jordan.
\newblock Graphical models, exponential families, and variational inference.
\newblock \emph{Foundations and Trends in Machine Learning}, 1\penalty0
  (1--2):\penalty0 1--305, 2008.

\bibitem[Wang et~al.(2022{\natexlab{a}})Wang, Chen, Zou, Song, Zhang, and
  Zhang]{wang2022ensemble}
Yun Wang, Tuo Chen, Runmin Zou, Dongran Song, Fan Zhang, and Lingjun Zhang.
\newblock Ensemble probabilistic wind power forecasting with multi-scale
  features.
\newblock \emph{Renewable Energy}, 201:\penalty0 734--751, 2022{\natexlab{a}}.

\bibitem[Wang et~al.(2022{\natexlab{b}})Wang, Xu, Zou, Zhang, and
  Zhang]{wang2022deep}
Yun Wang, Houhua Xu, Runmin Zou, Lingjun Zhang, and Fan Zhang.
\newblock A deep asymmetric {Laplace} neural network for deterministic and
  probabilistic wind power forecasting.
\newblock \emph{Renewable Energy}, 196:\penalty0 497--517, 2022{\natexlab{b}}.

\bibitem[Wheatcroft(2021)]{wheatcroft2021evaluating}
Edward Wheatcroft.
\newblock Evaluating probabilistic forecasts of football matches: {The} case
  against the ranked probability score.
\newblock \emph{Journal of Quantitative Analysis in Sports}, 17\penalty0
  (4):\penalty0 273--287, 2021.

\bibitem[Winkler and Murphy(1968)]{winkler1968good}
Robert~L. Winkler and Allan~H. Murphy.
\newblock ``{Good}'' probability assessors.
\newblock \emph{Journal of Applied Meteorology}, 7:\penalty0 751--758, 1968.

\bibitem[Winkler et~al.(1996)Winkler, Munoz, Cervera, Bernardo, Blattenberger,
  Kadane, Lindley, Murphy, Oliver, and R{\'\i}os-Insua]{winkler1996scoring}
Robert~L. Winkler, Javier Munoz, Jos{\'e}~L. Cervera, Jos{\'e}~M Bernardo, Gail
  Blattenberger, Joseph~B. Kadane, Dennis~V. Lindley, Allan~H. Murphy,
  Robert~M. Oliver, and David R{\'\i}os-Insua.
\newblock Scoring rules and the evaluation of probabilities.
\newblock \emph{Test}, 5:\penalty0 1--60, 1996.

\bibitem[Yu and Moyeed(2001)]{yu2001bayesian}
Keming Yu and Rana~A. Moyeed.
\newblock Bayesian quantile regression.
\newblock \emph{Statistics \& Probability Letters}, 54\penalty0 (4):\penalty0
  437--447, 2001.

\bibitem[Yu and Zhang(2005)]{yu2005asymmetric}
Keming Yu and Jin Zhang.
\newblock A three-parameter asymmetric {Laplace} distribution and its
  extension.
\newblock \emph{Communications in Statistics: Theory and Methods}, 34\penalty0
  (9-10):\penalty0 1867--1879, 2005.

\bibitem[Yu et~al.(2021)Yu, Song, Song, and Ermon]{lantao2021pseudo}
Lantao Yu, Jiaming Song, Yang Song, and Stefano Ermon.
\newblock Pseudo-spherical contrastive divergence.
\newblock In \emph{Advances in Neural Information Processing Systems}, 2021.

\end{thebibliography}

\appendix
\section{More details on Covid-19 mortality experiments}

\subsection{Converting Hub forecasts}
\label{app:covid-converting}

Forecasts in the Hub appear in terms of their quantiles
\smash{$[F]=\{F^{-1}(\tau):\tau\in T\}$}, where $F$ is the forecasted cumulative
distribution function and $T$ is a discrete set of probability levels,
containing evenly-spaced values from 0.05 to 0.95 in increments of 0.05, as well
as 0.01, 0.025, 0.0975, and 0.99. Thus a forecast is in fact not a probability
distribution but an equivalence class $[F]$ over probability distributions
comprising all distributions that identify on every quantile in $T$. In order to
compute the loss for a given forecast and observed outcome, we choose a
particular representative from the equivalence class $[F]$, described as
follows. First, we set the representative to have a density which is piecewise
linear between $\min [F]$ and $\max [F]$ with knots at elements of
$[F]$. Second, we set the representative to have lower and upper tails (below
$\min [F]$ and above $\max [F]$, respectively) of exponential distributions with
quantiles matching $[F]$ on the bottom and top two quantiles in $T$,
respectively. If $F$ is the representative of $[F]$ described above, then we
define the loss of the forecast to be $\ell([F], y)=\ell(F, y)$. Likewise, the
forecast variance was defined as the variance of a random variable with
distribution $F$. Finally, we discarded forecasts from the Hub with atoms
(quantiles which were equal at adjacent probability levels), or forecasts with
quantile crossings (quantiles which were out of order at adjacent levels).

\subsection{Standardizing target values}
\label{app:covid-standardizing}

For each location, we estimated the smoothed mean function of the target
distribution across time using trend filtering of cubic order
\citep{kim2009trend, tibshirani2014adaptive} on the observed death counts. Trend
filtering is a general-purpose nonparametric smoother that acts similarly to a
locally-adaptive regression spline; it is formulated as the solution to a
penalized least squares problem, and we used a cross-validation scheme with the
one-standard-error rule to choose the regularization parameter. We then
estimated the smoothed variance of the target distribution across dates by
applying trend filtering of constant order to the squared residuals from the
first step (observed outcomes minus smoothed means), again using
cross-validation to select the regularization parameter.

\end{document}